\newtheorem{theo}[subsubsection]{Theorem}
\newtheorem{lem}[subsubsection]{Lemma}
\newtheorem{prop}[subsubsection]{Proposition}
\newtheorem{cor}[subsubsection]{Corollary}
\newtheorem{exam}[subsubsection]{Example}
\newtheorem{rem}[subsubsection]{Remark}
\newtheorem{defi}[subsubsection]{Definition}
\newcommand{\bS}{{\bf S}}
\newcommand{\bP}{{\bf P}}
\newcommand{\B}{{\bf B}}
\newcommand{\cC}{{\cal C}}
\newcommand{\D}{{\cal D}}
\newcommand{\cT}{{\cal Z}}
\newcommand{\id}{{\it id}}
\newcommand{\Gr}{{\bf Gr}}
\newcommand{\Sets}{{{\bf Sets}}}
\newcommand{\Vect}{{\bf Vect}}
\newcommand{\Vines}{{\bf Vines}}
\newcommand{\Mon}{{\bf Mon}}
\newcommand{\Mod}{{\bf Mod}}
\newcommand{\Moncat}{{\bf Moncat}}
\newcommand{\Cat}{{\bf Cat}}
\newcommand{\Common}{{\bf Common}}
\newbox\Treeone
\newcommand*{\TreeOne}{\copy\Treeone}
\newbox\Treeoneone
\newcommand*{\TreeOneOne}{\copy\Treeoneone}
\newbox\Treetwo
\newcommand*{\TreeTwo}{\copy\Treetwo}
\newbox\Treethree
\newcommand*{\TreeThree}{\copy\Treethree}
\newbox\Treeoneoneone
\newcommand*{\TreeOneOneOne}{\copy\Treeoneoneone}
\newbox\Treetwoone
\newcommand*{\TreeTwoOne}{\copy\Treetwoone}
\newbox\Treeonetwo
\newcommand*{\TreeOneTwo}{\copy\Treeonetwo}
\begin{document}
\author{A. Davydov}
\title{Quasi-commutative algebras}
\maketitle
\begin{abstract}
We characterise algebras commutative with respect to a Yang-Baxter operator (quasi-commutative algebras) in terms of
certain cosimplicial complexes. In some cases this characterisation allows the classification of all possible
quasi-commutative structures.
\end{abstract}
\tableofcontents
\section{Introduction}
From a category theory point of view the main result of the paper is a characterisation of the free braided monoidal
category, containing a commutative monoid, as a localisation of some combinatorially given category. The situation in
the (symmetric) monoidal case is well understood (as basic examples of algebraic theories \cite{lw} or PROPs
\cite{mc0}). For example, the free monoidal category, containing a monoid, is the category of finite ordered sets and
order preserving maps; the free symmetric monoidal category, containing a commutative monoid, is the category of finite
sets; the free symmetric monoidal category, containing a monoid, is the category of finite sets and maps with linear
orders on fibres. In all these examples the tensor product is given by the disjoint union of sets.
\newline
The braided case seems to be much less combinatorial. Usually, the free braided monoidal categories, containing some
sort of monoid, are given by (monoidal) generators and relations (as free monoidal categories, containing some more
complicated algebraic structure) or as categories of geometric objects (geometric braids, vines etc.).
\newline
In this paper we identify the free braided monoidal category, containing a commutative monoid, with the free monoidal
category containing a cosimplicial monoid, satisfying some additional condition (the one we call the {\em covering
condition}). This condition requires the invertibility of certain ({\em covering}) maps, composed of tensor products of
cosimplicial maps and monoid multiplications. We also present a combinatorial model for the free monoidal category
containing a cosimplicial monoid. It is the category of pruned trees of height 2 in the sense of M. Batanin
\cite{ba,bs}, which can be seen as ordered sets of ordered sets. Under this identification covering maps correspond to
maps of trees, which are bijective on leaves. Thus our main result says that the free braided monoidal category,
containing a commutative monoid, is the category of pruned trees of height 2, localised with respect to maps, bijective
on leaves. We also note that the collection of maps bijective on leaves is monoidally generated by just two maps
(between trees with two leaves).

From an algebraic point of view the main result provides a Yang-Baxter operator on a monoid (algebra) if it is the
first component of a cosimplicial monoid (algebra) satisfying the covering condition. Actually, the whole cosimplicial
monoid is not necessary, only its first three components. Known Yang-Baxter operators on monoid-like objects fit into
this scheme (groups, Hopf and Hopf-Galois algebras). In the case of groups  we recover the characterisation of groups,
commutative with respect to a Yang-Baxter operator, in terms of bijective 1-cocycles (see \cite{ess}).

\section*{Acknowledgement}
The author would like to thank members of the Australian Category Seminar for fruitful discussions. Special thanks
are to M. Batanin for introducing me into the language of trees,  S. Lack for
referring me to \cite{la}, R. Moore and D. Steffen for numerous consultations in xy-pic and to R. Street for encouraging
the author to look beyond, when he was struggling to prove Yang-Baxter axiom using just the first two components of
the cosimplicial object (as was said by Kung Fu Tzu ``it is very hard to find a black cat in a dark room,
especially if it is not there").

\section{Commutative monoids in braided categories}
Here we give list some properties of and constructions for (commutative) monoids in (braided, symmetric) monoidal
categories. The results of this section are mostly well-known. Sometimes we add (sketches of) proofs.

By a monoidal functor we will mean a strong monoidal functor (monoidal structure constraints $F(X)\otimes F(Y)\to
F(X\otimes Y), 1\to F(1)$ are assumed to be isomorphisms). If not stated otherwise we assume monoidal categories to be
strict (associativity and unit constraints are given by identity morphisms).

\subsection{Monoids in monoidal categories}\label{mmc}
Here we recall (see \cite{mc}) the description of a free monoidal category generated by a monoid and its relation to
(co)simplicial complexes.

Let $\cC$ be a strict monoidal category. A {\em monoid} in $\cC$ is an object $A$ with morphisms $$\mu:A\otimes A\to A\
\mbox{(multiplication map)},$$ $$\iota:1\to A\ \mbox{(unit inclusion)}$$ satisfying the conditions $$\mu(\mu\otimes I)
= \mu(I\otimes\mu),$$ $$\mu(\iota\otimes I) = \mu(I\otimes\iota) = I.$$ {\em A homomorphism of monoids} is a morphism
$f:A\to B$ such that $$\mu(f\otimes f) = f\mu,\quad f\iota = \iota.$$

Monoids in a monoidal category $\cC$ form a category which we will denote $\Mon(\cC)$. A monoidal functor $\cC\to\D$
induces a functor $\Mon(\cC)\to\Mon(\D)$. This amounts to a 2-functor $\Mon:\Moncat\to\Cat$ from the 2-category
$\Moncat$ of monoidal categories with monoidal functors and monoidal natural transformations to the 2-category $\Cat$
of categories.

The 2-functor $\Mon$ is representable, i.e. there
is a monoidal category $\Delta$ with a monoid $A\in\Delta$ ({\em the free monoidal category containing a monoid})
such that the evaluation functor at $A$ from the category of monoidal functors $\Moncat(\Delta,\cC)$ into the category
of monoids $\Mon(\cC)$ is an equivalence. The category $\Delta$ admits the following explicit description: it is the
(skeletal) category of well-ordered finite sets with order preserving maps. Objects of $\Delta$ are parameterised by
natural numbers, we will use the following notation $[n] = \{1,...,n\}$ for an $n$-element set (different to MacLane's). Tensor product on $\Delta$
is given by the sum (the ordered union) of sets. Obviously the sum of order-preserving maps is order-preserving. The unit
object for this tensor product is the empty set $[0]$. The monoid $A\in\Delta$ is the one-element set $[1]$ with monoid
structure given by the unique maps $\mu:[1]\otimes[1] = [2]\to[1]$ and $\iota:[0]\to[1]$.

Below we establish the freeness of $\Delta$. The functor $$\Mon(\cC)\to\Moncat(\Delta,\cC),\ A\mapsto F_A$$ adjoint
(quasi-inverse) to the evaluation functor $$\Moncat(\Delta,\cC)\to\Mon(\cC),\ F\mapsto F[1]$$ can be constructed as
follows. The value $F_A[n]$ is the $n$-th power $A^{\otimes n}$. For an order-preserving map $f:[n]\to [m]$ the
morphism $F_A(f):A^{\otimes n}\to A^{\otimes m}$ is the tensor product
$\mu(|f^{-1}(1)|)\otimes...\otimes\mu(|f^{-1}(m)|)$ where $|f^{-1}(i)|$ is the cardinality of the fibre
$f^{-1}(i)=\{j\in[n]: f(j)=i\},$ and where $\mu(0) = \iota:1\to A$, $\mu(1) = I:A\to A$, and $\mu(k):A^{\otimes k}\to
A$ is the iterated multiplication $\mu(\mu\otimes I)...(\mu\otimes I\otimes...\otimes I)$ for $k>1$.

The category $\Delta$ has the following presentation. Denote by $\sigma^n_i:[n]\to [n-1], i=0,...,n-2$, surjective
monotone maps, defined by the condition $\sigma^n_i(i) = \sigma^n_i(i+1)$, and by $\partial^n_j:[n]\to[n+1],
j=0,...,n,$ injective monotone maps, not taking the value $i$. It is not hard to see that these maps satisfy the
identities (here and later on we omit the superscripts): $$\begin{array}{llll} \sigma_j\sigma_i & = &
\sigma_i\sigma_{j+1}, & i\leq j \\
\partial_i\partial_j & = & \partial_{j+1}\partial_i, & i\leq j \\ \sigma_j\partial_i & = & \partial_i\sigma_{j-1}, &
i<j \\ & = & 1, & i=j,j+1\\ & = & \partial_{i-1}\sigma_j, & i>j+1
\end{array}$$
The morphisms of the category $\Delta$ are generated by $\sigma^n_i$ and $\partial^n_j$ subject to the identities above
\cite{mc}.

The above presentation allows the following construction. With a monoid we can associate a cosimplicial object $A^*$ in
$\cC$ ({\em cobar construction}): where $A^n = A^{\otimes n}$, with coface maps $\sigma_i:A^n\to A^{n-1}$ defined by
$\sigma_i = I^{\otimes i}\otimes\mu\otimes I^{\otimes n-i-2}$ and codegeneration maps $\partial_i:A^n\to A^{n+1}$
defined by $\partial_i = I^{\otimes i}\otimes\iota\otimes I^{\otimes n-i}$.

\subsection{Monoids in braided monoidal categories}\label{mbc}

Here we recall the well-know properties of monoids in a braided strict monoidal category $\cC$ (see \cite{js}).

\begin{lem}\label{monm}
The product $A\otimes B$ of two monoids in a braided category can be equipped with the structure of a
monoid:
\begin{equation}\label{prpr}
\mu_{A\otimes B} = (\mu_A\otimes\mu_B)(A\otimes c_{B,A}\otimes B),\quad \iota_{A\otimes B} = \iota_A\otimes\iota_B
\end{equation}
making the category of monoids $\Mon(\cC)$ a monoidal category.
\end{lem}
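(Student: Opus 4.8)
The plan is to verify in turn that $(A\otimes B,\mu_{A\otimes B},\iota_{A\otimes B})$ satisfies the monoid axioms, and then that this assignment equips $\Mon(\cC)$ with a monoidal structure. The only tools needed are the naturality of the braiding, $c_{X',Y'}(f\otimes g)=(g\otimes f)c_{X,Y}$, the two hexagon identities $c_{X,Y\otimes Z}=(I\otimes c_{X,Z})(c_{X,Y}\otimes I)$ and $c_{X\otimes Y,Z}=(c_{X,Z}\otimes I)(I\otimes c_{Y,Z})$, the associativity and unit axioms of $A$ and $B$ separately, and the coherence fact that braiding against the unit is trivial, $c_{1,X}=c_{X,1}=I$. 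The unit axiom is the easy part: expanding $\mu_{A\otimes B}(\iota_{A\otimes B}\otimes I)$ and using naturality of $c$ against $\iota_B\colon 1\to B$ together with $c_{1,A}=I$ makes the crossing involving the unit disappear, reducing the expression to $\mu_A(\iota_A\otimes I)\otimes\mu_B(\iota_B\otimes I)=I\otimes I$ by the unit axioms for $A$ and $B$; the right unit axiom is symmetric.

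The associativity axiom $\mu_{A\otimes B}(\mu_{A\otimes B}\otimes I)=\mu_{A\otimes B}(I\otimes\mu_{A\otimes B})$ is the heart of the lemma, and the step I expect to be the main obstacle. Writing the domain $(A\otimes B)^{\otimes 3}$ as $A\otimes B\otimes A\otimes B\otimes A\otimes B$, both composites braid the three $B$-strands to the right of the intervening $A$-strands and then apply $\mu_A$ and $\mu_B$, but they do so in different orders. The left composite produces the crossing $c_{B,A}$ on the first two copies followed by a crossing $c_{B\otimes B,A}$ of the already-multiplied $B$ past the third $A$, while the right composite produces $c_{B,A}$ on the last two copies followed by $c_{B,A\otimes A}$ of the first $B$ past the already-multiplied $A$; here naturality of $c$ has been used to pull the intermediate multiplications out of the second crossing in each case. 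Applying the first hexagon identity to $c_{B,A\otimes A}$ and the second to $c_{B\otimes B,A}$ decomposes both sides into the same three elementary crossings, after which naturality (to commute $\mu_A$ and $\mu_B$ back through those crossings) together with the separate associativity of $\mu_A$ and of $\mu_B$ identifies the two sides. The bookkeeping is routine once the hexagons are invoked at the right place; matching up the braiding strands is the only delicate point, and this is exactly where a string-diagram picture does the work.

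It remains to promote the construction to a monoidal structure on $\Mon(\cC)$. First I would check that the unit object $1\in\cC$, with $\mu_1=I\colon 1\otimes 1\to 1$ and $\iota_1=I$, is a monoid and serves as the tensor unit. Next, if $f\colon A\to A'$ and $g\colon B\to B'$ are monoid homomorphisms, then $f\otimes g$ is a homomorphism for the product structures: compatibility with multiplication follows by sliding $f\otimes g\otimes f\otimes g$ through the defining crossing via the naturality square $c_{B',A'}(g\otimes f)=(f\otimes g)c_{B,A}$ and then applying $\mu_{A'}(f\otimes f)=f\mu_A$ and $\mu_{B'}(g\otimes g)=g\mu_B$, while compatibility with units is immediate. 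Finally, since $\cC$ is strict, the associativity and unit constraints are identities on underlying objects; one checks (again a hexagon computation collecting the $A$-, $B$- and $C$-strands in their natural order) that the two product monoid structures on $(A\otimes B)\otimes C$ and on $A\otimes(B\otimes C)$ literally coincide, so these constraints lift to monoid isomorphisms, and the pentagon and triangle axioms hold in $\Mon(\cC)$ because they already hold in $\cC$.
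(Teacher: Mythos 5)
Your proof is correct and follows essentially the same route as the paper's: the paper's large commutative diagram for associativity of $\mu_{A\otimes B}$ encodes exactly your naturality-plus-hexagon manipulation reducing both composites to the same elementary crossings landing in $A^{\otimes 3}\otimes B^{\otimes 3}$, followed by the same naturality square showing $f\otimes g$ is a homomorphism and the same hexagon argument identifying the monoid structures on $(A\otimes B)\otimes C$ and $A\otimes(B\otimes C)$. The only difference is that you also spell out the routine unit and tensor-unit checks, which the paper leaves implicit.
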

\begin{proof}
Associativity of the multiplication (\ref{prpr}) follows from the commutative diagram:

$$\xygraph{ !{0;/r6.0pc/:;/u4.0pc/::}[]*+{(A\otimes B)^{\otimes 3}} (
 ( :[r(6)]*+{(A\otimes B)^{\otimes 2}}="(3,3)" ^{A\otimes B\otimes\mu_{A\otimes B}}
   ( :[d(6)]*+{A\otimes B}="(3,-3)" ^{\mu_{A\otimes B}}
   , :[d(3)l]*+{A^{\otimes 2}\otimes B^{\otimes 2}}="(2,0)" _{A\otimes c_{B,A}\otimes B}
     :"(3,-3)" _{\mu_A\otimes\mu_B}
   )
 , :[r(3)d]*+{A\otimes B\otimes A^{\otimes 2}\otimes B^{\otimes 2}} ^{A\otimes B\otimes A\otimes c_{B,A}\otimes B}
   ( :"(3,3)" ^{A\otimes B\otimes\mu_A\otimes\mu_B}
   , :[dd]*+{A^{\otimes 3}\otimes B^{\otimes 3}}="(0,0)" ^{A\otimes c_{B,A^{\otimes 2}}\otimes B^{\otimes 2}}
     ( :"(2,0)" _{A\otimes\mu_A\otimes B\otimes\mu_B}
     , :[dd]*+{A^{\otimes 2}\otimes B^{\otimes 2}}="(0,-2)" ^{\mu_A\otimes A\otimes\mu_B\otimes B}
       :"(3,-3)" ^{\mu_A\otimes\mu_B}
     )
   , :[dl]*+{A^{\otimes 2}\otimes B\otimes A\otimes B^{\otimes 2}}="(-1,1)"
      _{A\otimes c_{B,A}\otimes A\otimes B^{\otimes 2}}
     :"(0,0)" ^{A^{\otimes 2}\otimes c_{B,A}\otimes B^{\otimes 2}}
   )
 , :[d(3)r]*+{A^{\otimes 2}\otimes B^{\otimes 2}\otimes A\otimes B} ^{A\otimes c_{B,A}\otimes A\otimes A\otimes B}
   ( :"(-1,1)" ^{A^{\otimes 2}\otimes B\otimes c_{B,A}\otimes B}
   , :"(0,0)" _>>>>>>>>>>>>{A^{\otimes 2}\otimes c_{B^{\otimes 2},A}\otimes B}
   , :[d(3)l]*+{(A\otimes B)^{\otimes 2}}="(-3,-3)" ^{\mu_A\otimes\mu_B\otimes A\otimes B}
     ( :"(0,-2)" ^{A\otimes c_{B,A}\otimes B}
     , :"(3,-3)" _{\mu_{A\otimes B}}
     )
   )
 , :"(-3,-3)" _{\mu_{A\otimes B}\otimes A\otimes B}
 )
) }$$

Analogously tensor product of homomorphisms of monoids is a homomorphism of tensor products: \xymatrix{A\otimes
B\otimes A\otimes B \ar[rr]^{A\otimes c_{B,A}\otimes B} \ar[dd]^{f\otimes g\otimes f\otimes g} & & A\otimes
A\otimes B\otimes B \ar[rr]^{\mu_A\otimes\mu_B} \ar[dd]^{f\otimes f\otimes g\otimes g} & & A\otimes B
\ar[dd]_{f\otimes g} \\ \\ C\otimes D\otimes C\otimes D \ar[rr]^{C\otimes c_{D,C}\otimes D} & & C\otimes C\otimes
D\otimes D \ar[rr]^{\mu_C\otimes\mu_D} & & C\otimes D }

Finally, associativity of the tensor product $(A,\mu_A)\otimes(B,\mu_B) = (A\otimes B,\mu_{A\otimes B})$ follows
from the axioms of braiding. Indeed, the equality $$(A\otimes B\otimes c_{C,B})(c_{B\otimes C,A}\otimes B) =
(c_{B,A}\otimes B\otimes C)(B\otimes c_{C,A\otimes B})$$ implies that $$\mu_{A\otimes(B\otimes C)} =
(\mu_A\otimes\mu_B\otimes\mu_C)(A\otimes A\otimes B\otimes c_{C,B}\otimes C)(A\otimes c_{B\otimes C,A}\otimes
B\otimes C)$$ coincides with $$\mu_{(A\otimes B)\otimes C} = (\mu_A\otimes\mu_B\otimes\mu_C)(A\otimes
c_{B,A}\otimes B\otimes C\otimes C)(A\otimes B\otimes c_{C,A\otimes B}\otimes C).$$
\end{proof}

The cobar complex $A^*$ of a monoid $A$ in a braided monoidal category has the following extra bit of structure coming
from braiding. Each $A^n$ is equipped with the action of the braid group $B_n$, intertwined with the cosimplicial maps
in the following way:
\begin{lem}\label{bractcc}
Let $x_i$ denote the generator of $B_n$, acting on $A^n$ by $I_{A^{i-1}}\otimes c_{A,A}\otimes I_{A^{n-i-1}}$. Then
$$x_i\sigma_j = \left\{ \begin{array}{ll} \sigma_j x_i, & i<j \\ \sigma_{i+1}x_ix_{i+1}, & i=j\\ \sigma_ix_{i+1}x_i, &
i=j-1\\ \sigma_jx_{i+1}, & i>j-1 \end{array}\right. \quad\quad x_i\partial_j = \left\{ \begin{array}{ll} \partial_j x_i, &
i<j-1
\\ \partial_{i-1}, & i=j\\ \partial_{i+1}, & i=j-1\\ \partial_jx_{i-1}, & i>j-1 \end{array}\right.$$
\end{lem}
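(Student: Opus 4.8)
The plan is to verify each of the eight commutation relations by direct diagrammatic computation, using the definitions of the maps involved and the axioms of a braided monoidal category. Recall that $x_i = I^{\otimes i-1}\otimes c_{A,A}\otimes I^{\otimes n-i-1}$ acts by braiding the $i$-th and $(i+1)$-th tensor factors, that $\sigma_j = I^{\otimes j}\otimes\mu\otimes I^{\otimes n-j-2}$ multiplies the $(j+1)$-th and $(j+2)$-th factors, and that $\partial_j = I^{\otimes j}\otimes\iota\otimes I^{\otimes n-j}$ inserts a unit after the $j$-th factor. Each relation is an equality of two morphisms $A^{\otimes n}\to A^{\otimes n-1}$ (for the $\sigma$ relations) or $A^{\otimes n}\to A^{\otimes n+1}$ (for the $\partial$ relations), and in every case the two sides act nontrivially only on a small window of tensor factors (at most three), with identities elsewhere.

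First I would dispose of the ``disjoint support'' cases, where the braiding $x_i$ and the (co)face/(co)degeneration map act on disjoint blocks of tensor factors. These are the cases $i<j$ and $i>j-1$ for $\sigma_j$ (after accounting for the index shift caused by $\mu$ reducing the count by one) and $i<j-1$, $i>j-1$ for $\partial_j$. Here the relations $x_i\sigma_j=\sigma_j x_{i'}$ and $x_i\partial_j=\partial_j x_{i'}$ follow immediately from the bifunctoriality (interchange law) of $\otimes$, since morphisms acting on disjoint factors commute; the only subtlety is bookkeeping the reindexing, which I would read off by tracking which factors survive after applying $\mu$ or $\iota$. The two ``absorption'' cases for the codegeneracy, namely $x_i\partial_j=\partial_{i\mp1}$ when $i=j$ or $i=j-1$, reduce to the identity $c_{A,A}(\iota\otimes A)=A\otimes\iota$ and its mirror image: braiding a freshly inserted unit past an adjacent factor leaves that factor unchanged and merely relocates the unit. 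This is one of the standard naturality-plus-unit compatibilities of a braiding, which I would invoke directly.

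The genuinely substantive cases are the two ``overlap'' relations for the coface maps, $i=j$ and $i=j-1$, where the braided pair and the multiplied pair share exactly one factor. Take $i=j$: on the relevant three factors the left side $x_i\sigma_i$ first multiplies factors $i+1,i+2$ and then braids the result past factor $i$, while the right side $\sigma_{i+1}x_ix_{i+1}$ braids $c$ twice before multiplying. The claimed identity $x_i\sigma_i=\sigma_{i+1}x_ix_{i+1}$ is precisely the \emph{hexagon axiom} for the braiding, $c_{A,A\otimes A}=(A\otimes c_{A,A})(c_{A,A}\otimes A)$, combined with the naturality of $c$ with respect to $\mu$. I would draw the three-strand braid diagram and show both composites equal the single crossing of one strand over a merged pair. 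The case $i=j-1$ is symmetric, using the other hexagon $c_{A\otimes A,A}=(c_{A,A}\otimes A)(A\otimes c_{A,A})$.

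The main obstacle will be the overlap cases, and within them the careful management of naturality squares: one must commute the braiding $c_{A,A}$ past the multiplication $\mu$ using naturality of $c$ in each argument, i.e. $c_{A,A}(\mu\otimes A)=(A\otimes\mu)c_{A\otimes A,A}$ and $c_{A,A}(A\otimes\mu)=(\mu\otimes A)c_{A,A\otimes A}$, and only then apply the hexagon to decompose the composite braiding into the two elementary generators $x_i,x_{i+1}$. I expect the index bookkeeping across all eight cases, rather than any single diagram, to be the most error-prone part, so I would organise the proof as a table mirroring the statement, treating the four $\sigma$-cases and four $\partial$-cases in parallel and citing the same two underlying facts (naturality of $c$, and the hexagons) throughout.
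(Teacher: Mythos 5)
Your proposal is correct and takes essentially the same route as the paper: the paper also disposes of the non-overlapping cases by functoriality of the tensor product (its ``sliding property''), handles the two $\partial$-overlap cases by the braiding-unit compatibility $c_{A,A}(\iota\otimes I)=I\otimes\iota$ and its mirror, and proves the two $\sigma$-overlap cases by commutative diagrams whose content is exactly your combination of naturality of $c$ with respect to $\mu$ and the hexagon decompositions $c_{A^{\otimes 2},A}=(c_{A,A}\otimes I)(I\otimes c_{A,A})$, $c_{A,A^{\otimes 2}}=(I\otimes c_{A,A})(c_{A,A}\otimes I)$. Your caution about index bookkeeping is well placed, but it does not affect the substance of the argument.
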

\begin{proof}
The first and the last equations in both cases follow from functoriality of the tensor product (``sliding property").
The two middle equations in the first case are consequences of the commutativity of the diagrams: $$\xymatrix{ &
A^{\otimes 3} \ar[rr]^{\mu A} \ar[dd]^{c_{A^{\otimes 2},A}} \ar[ld]_{Ac_{A,A}} && A^{\otimes 2} \ar[dd]_{c_{A,A}} & &
A^{\otimes 3} \ar[rr]^{A\mu} \ar[dd]^{c_{A,A^{\otimes 2}}} \ar[ld]_{c_{A,A}A} && A^{\otimes 2} \ar[dd]_{c_{A,A}}\\
A^{\otimes 3} \ar[dr]_{c_{A,A}A} &&& &A^{\otimes 3} \ar[dr]_{Ac_{A,A}} \\ & A^{\otimes 3} \ar[rr]^{A\mu}
 && A^{\otimes 2} && A^{\otimes 3} \ar[rr]^{\mu A} && A^{\otimes 2} }$$
Similarly the two middle equations in the first case follow from the commutative diagrams: $$\xymatrix{ A
\ar[rr]^{\iota A} \ar[drr]_{A\iota} && A^{\otimes 2} \ar[d]^{c_{A,A}} && A \ar[rr]^{A\iota} \ar[drr]_{\iota A} &&
A^{\otimes 2} \ar[d]^{c_{A,A}} \\ && A^{\otimes 2} &&& & A^{\otimes 2} }$$
\end{proof}
The above lemma defines a {\em distributive law} $$\Delta(m,n)\times\B(n,n)\to \B(m,m)\times\Delta(m,n)$$ for order
preserving maps over braids. Here $\B(n,n)$ is the braid group on $n$ strings. This distributive law was used in
\cite{ds} to characterise the free braided monoidal category containing a monoid as a mixture $\B\Delta$ of the free
braided category on one object $\B$ and the free monoidal category containing a monoid $\Delta$. Objects of $\B\Delta$
are natural numbers. Morphisms of $\B\Delta$ are pairs $\B\Delta(m,n) = \B(m,m)\times\Delta(m,n)$, with composition
defined by means of the distributive law and compositions in $\B$ and $\Delta$. In other words we have the following.
\begin{cor}\label{dist}
The natural monoidal functors from $\B$ and $\Delta$ to the free braided monoidal category, containing a monoid, are
isomorphic on objects.  Any morphism of the free braided monoidal category, containing a monoid, can be uniquely
decomposed into a morphism in $\B$ followed by a morphism in $\Delta$.
\end{cor}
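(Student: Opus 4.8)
The plan is to derive both assertions by unwinding the description of the free braided monoidal category containing a monoid as the mixture $\B\Delta$, as recorded above from \cite{ds}; I write $\cM$ for this category. First I would identify the two comparison functors. Since $\B$ is the free braided category on one object, the object underlying the monoid $A\in\cM$ determines a braided monoidal functor $\B\to\cM$; since $\Delta$ is the free monoidal category containing a monoid (see \ref{mmc}), the monoid $A$ determines a monoidal functor $\Delta\to\cM$. Both send the generating object to $A$, hence an object $n$ to $A^{\otimes n}$; under the parametrisation of objects of all three categories by the natural numbers they are therefore the identity on objects, which is the first assertion.

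For the decomposition I would read it off the product structure of the hom-sets $\B\Delta(m,n)=\B(m,m)\times\Delta(m,n)$. Under the two functors above a braid $b\in\B(m,m)$ is the morphism $(b,\id)$ and an order-preserving map $f\in\Delta(m,n)$ is the morphism $(\id,f)$, and the composition supplied by the distributive law is arranged precisely so that the pair $(b,f)$ is the composite $(\id,f)\circ(b,\id)$, i.e. the braid $b$ followed by the $\Delta$-morphism $f$. Existence of the factorisation is thus immediate, and its uniqueness is forced by the fact that the hom-set is literally the product $\B(m,m)\times\Delta(m,n)$: the data $(b,f)$ recovering a given morphism are unique.

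The only point needing genuine work --- and the step I expect to be the crux --- is the input from \cite{ds}, namely that the formally defined category $\B\Delta$, with composition assembled from the distributive law of Lemma~\ref{bractcc} together with the compositions of $\B$ and $\Delta$, really carries a braided monoidal structure with $[1]$ a monoid and really has the universal property of $\cM$. Granting this, the corollary is just the unpacking above. Were I to reprove it from scratch, the obstacle would be the uniqueness/faithfulness half: one must check that the two families of relations in Lemma~\ref{bractcc} satisfy the coherence (Yang--Baxter together with the simplicial) compatibilities that make the composite associative and unital, and then --- the harder part --- that these relations are \emph{complete}, so that no further relation in the free braided monoidal category collapses two distinct normal forms ``braid followed by order-preserving map''. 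This completeness is exactly what guarantees the uniqueness of the decomposition, and a direct proof would proceed by exhibiting a confluent rewriting of arbitrary composites of $\mu$, $\iota$ and $c$ to this normal form.
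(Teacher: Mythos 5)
Your proposal is correct and follows exactly the paper's own route: the paper gives no separate argument for this corollary, but states it as an immediate unpacking of the Day--Street characterisation (via the distributive law of Lemma~\ref{bractcc}) of the free braided monoidal category containing a monoid as the mixture $\B\Delta$ with $\B\Delta(m,n)=\B(m,m)\times\Delta(m,n)$, which is precisely your reading (identity on objects, existence and uniqueness of the factorisation from the product structure of hom-sets). Your closing remarks about what a from-scratch proof would require are accurate but not needed, since both you and the paper take the universal property of $\B\Delta$ from \cite{ds} as given.
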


A monoid $(A,\mu,\iota)$ in $\cC$ is {\em commutative} if $$\mu c_{A,A} = \mu.$$
\begin{lem}\label{comm}
For a commutative monoid $A$, the multiplication map $\mu:A\otimes A\to A$ is a homomorphism of monoids:
\end{lem}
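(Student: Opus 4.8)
The plan is to verify directly the two defining conditions for $\mu\colon A\otimes A\to A$ to be a homomorphism from the tensor-product monoid $(A\otimes A,\mu_{A\otimes A},\iota_{A\otimes A})$ of Lemma \ref{monm} to $(A,\mu,\iota)$, namely the unit condition $\mu\,\iota_{A\otimes A}=\iota$ and the multiplicativity condition $\mu\,\mu_{A\otimes A}=\mu(\mu\otimes\mu)$. The unit condition is immediate and uses no braiding: since $\iota_{A\otimes A}=\iota\otimes\iota$, I would factor $\mu(\iota\otimes\iota)=\mu(\iota\otimes I)(I\otimes\iota)$ and apply the unit axiom $\mu(\iota\otimes I)=I$ together with the strict identification $1\otimes 1=1$ to obtain $\iota$.

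For multiplicativity, unwinding the definition of $\mu_{A\otimes A}$ from (\ref{prpr}) reduces the claim to the identity
$$\mu(\mu\otimes\mu)=\mu(\mu\otimes\mu)(A\otimes c_{A,A}\otimes A)$$
of morphisms $A^{\otimes 4}\to A$. The key idea is that each side is a fourfold product, so associativity lets me re-bracket until the two braided factors (in positions $2$ and $3$) are fed into a single copy of $\mu$, at which point commutativity $\mu c_{A,A}=\mu$ simply erases the braiding. Concretely, I would first use associativity to rewrite the right-hand side as $\mu(\mu(A\otimes\mu)\otimes A)(A\otimes c_{A,A}\otimes A)$. By functoriality of $\otimes$, sliding the identity on the fourth factor past the braiding on the middle pair, this equals $\mu\big((\mu(A\otimes\mu)(A\otimes c_{A,A}))\otimes A\big)=\mu\big(\mu(A\otimes(\mu c_{A,A}))\otimes A\big)$. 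Applying $\mu c_{A,A}=\mu$ collapses the inner factor back to $\mu(A\otimes\mu)$, and re-associating returns $\mu(\mu\otimes\mu)$, as required.

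The step I would treat most carefully — and the only genuinely subtle point — is that $c_{A,A}$ is a true braiding morphism rather than a literal transposition, so the ``interchange'' of factors $2$ and $3$ cannot be argued on elements; it must be justified by the interchange law for $\otimes$ and by invoking commutativity precisely at the instant those two factors meet under a single multiplication. Once the bookkeeping has positioned the braiding immediately before one copy of $\mu$, commutativity applies verbatim, and notably no deeper property of the braiding (no hexagon or Yang--Baxter relation) is needed. Everything else is the standard associativity manipulation, which I would not spell out in full.
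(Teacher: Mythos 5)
Your proof is correct and follows essentially the same route as the paper's: re-bracket $\mu(\mu\otimes\mu)$ by associativity so that the middle two factors meet under a single copy of $\mu$, slide the braiding there by functoriality of $\otimes$, and erase it with $\mu c_{A,A}=\mu$ --- this is exactly the content of the paper's commutative diagram, whose key inner triangle is $(1\otimes\mu\otimes 1)(1\otimes c\otimes 1)=1\otimes\mu\otimes 1$. Your explicit check of the unit condition $\mu\,\iota_{A\otimes A}=\iota$ is a small completeness bonus that the paper leaves tacit.
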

\begin{proof}
This follows from the commutative diagram: \xymatrix{ A^{\otimes 4} \ar[dd]_{\mu_{A^{\otimes 2}}}
\ar[rrr]^{\mu\otimes\mu} \ar[dr]_{1\otimes c\otimes 1} \ar[drr]^{1\otimes\mu\otimes 1} & & & A^{\otimes 2}
\ar[dd]^{\mu} \\ & A^{\otimes 4} \ar[r]_{1\otimes\mu\otimes 1} \ar[dl]^{\mu\otimes\mu} & A^{\otimes 3}
\ar[dr]_{\mu(\mu\otimes 1)} & \\ A^{\otimes 2} \ar[rrr]_{\mu} & & & A } \end{proof}

Below we say a few words about a free braided monoidal category, containing  a commutative monoid. It has a nice
geometric presentation, where objects are points on an interval and morphisms are vines between them (see \cite{la,st}). We will denote free braided monoidal category, containing  a commutative monoid, by $\Vines$. An important property of the category $\Vines$ is its presentation in terms of $\B$ and $\Delta$. As in
the corollary \ref{dist}  the natural monoidal functors from $\B$ and $\Delta$ to the free braided monoidal category,
containing a commutative monoid, are isomorphic on objects.  Any morphism of the free braided monoidal category,
containing a monoid, can be decomposed into a morphism in $\B$ followed by a morphism in $\Delta$. The decomposition is
not unique. Two pairs $(\sigma\pi,\delta)$, $(\sigma,\delta)$ (here $\sigma, \pi$ are braids and $\delta$ is an order
preserving map) define the same morphism if and only if (the permutation associated with) $\pi$ stabilises the fibres
of $\delta$.

Since the unit map is a homomorphism of monoids, it follows from the lemma \ref{monm} that the semi-cosimplicial part
$(A^{\otimes *},\partial_*)$ (codegeneracies only) of the cosimplicial object associated with a monoid $A$ is a
semi-cosimplicial object in $\Mon(\cC)$ (all codegeneracies are homomorphisms of monoids). As a semi-cosimplicial
object it has the following property, which we call the covering condition.
\begin{defi}
\end{defi}
Let $M^*$ be a semi-cosimplicial monoid in a monoidal category $\cC$. For a collection $f_i:[n_i]\to [n], i=1,...,m,$
of injective order preserving maps such that $im(f_i)\cap im(f_j)=\emptyset$ for $i\not= j$ and $\cup_i im(f_i) = [n]$
define the {\em covering map} as the composition $$\otimes_{i=1}^m M^{n_i}\stackrel{\otimes_i M^{f_i}}{\longrightarrow}
(M^n)^{\otimes m}\stackrel{mult}{\longrightarrow} M^n$$ in $\cC$, where $mult$ is the iterated multiplication on $M^n$.
We say that $M^*$ satisfies the {\em covering condition} if any covering map is an isomorphism.
\medskip
By the lemma \ref{comm} a commutative monoid gives rise to a cosimplicial object $A^{\otimes *}$ in $\Common(\cC)$
satisfying the covering condition (all cosimplicial maps are homomorphisms of monoids).

\subsection{Monoids in symmetric monoidal categories}
\begin{lem}\label{comsym}
The subcategory $\Common(\cC)\subset\Mon(\cC)$ of commutative monoids in a symmetric monoidal category $\cC$ is
symmetric monoidal.
\end{lem}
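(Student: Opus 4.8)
The plan is to verify three things: that $\Common(\cC)$ is closed under the monoidal structure on $\Mon(\cC)$ provided by Lemma \ref{monm}; that the braiding $c$ of $\cC$ lifts to $\Common(\cC)$; and that this lifted braiding is symmetric. The last point will come for free, so the real work sits in the first two, both of which are diagram chases in which the symmetry of $\cC$ (as opposed to a mere braiding) is the essential ingredient.

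First I would show that the tensor product $A\otimes B$ of two commutative monoids is again commutative, i.e. $\mu_{A\otimes B}\,c_{A\otimes B,A\otimes B} = \mu_{A\otimes B}$ (the unit monoid $1$ being trivially commutative, so $\Common(\cC)$ contains the monoidal unit). Using the hexagon identities I would expand $c_{A\otimes B,A\otimes B}$ on $(A\otimes B)^{\otimes 2}$ into the elementary crossings $c_{A,A}$, $c_{A,B}$, $c_{B,A}$, $c_{B,B}$, and then use naturality of the braiding to slide the multiplications $\mu_A,\mu_B$ past the crossings. The symmetry relations $c_{B,A}c_{A,B}=I$ and $c_{A,B}c_{B,A}=I$ let the intermediate $A$-over-$B$ crossings cancel in pairs, leaving only $c_{A,A}$ acting on the two $A$-strands and $c_{B,B}$ on the two $B$-strands; the commutativity hypotheses $\mu_A c_{A,A}=\mu_A$ and $\mu_B c_{B,B}=\mu_B$ then absorb these, returning $\mu_{A\otimes B}$.

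Next I would check that $c_{A,B}:A\otimes B\to B\otimes A$ is a homomorphism of monoids, so that it is in fact a morphism of $\Common(\cC)$. Preservation of units, $c_{A,B}(\iota_A\otimes\iota_B)=\iota_B\otimes\iota_A$, is immediate from naturality of the braiding applied to $\iota_A$ and $\iota_B$. Multiplicativity, $c_{A,B}\,\mu_{A\otimes B}=\mu_{B\otimes A}(c_{A,B}\otimes c_{A,B})$, is the main diagram chase: substituting the definitions from (\ref{prpr}), both composites amount to reordering the four strands $A,B,A,B$ into $B,B,A,A$ and then multiplying, and I would reconcile the two resulting crossing patterns using naturality together with the hexagon and symmetry relations. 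I expect this to be the main obstacle, precisely because one must show the two sides agree as morphisms and not merely as underlying permutations; it is here that $c_{B,A}c_{A,B}=I$ is used in an essential way, so that a genuine (non-symmetric) braiding would not suffice.

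Finally I would observe that the resulting braiding is natural in $A$ and $B$, inherited from its naturality in $\cC$, and that every coherence condition holds automatically. The forgetful functor $\Common(\cC)\to\cC$ is faithful and strict monoidal, and the associativity, unit, hexagon and symmetry axioms for $c$ are valid in $\cC$; since for each axiom both sides are morphisms of $\Common(\cC)$ that are sent to equal morphisms of $\cC$, faithfulness forces them to be equal in $\Common(\cC)$. In particular $c_{B,A}c_{A,B}=I$ passes from $\cC$ to $\Common(\cC)$, so the braiding is symmetric, and $\Common(\cC)$ is a symmetric monoidal category.
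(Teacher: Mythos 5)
Your proposal is correct and takes essentially the same route as the paper: the paper's proof consists of exactly your two verifications, done as commutative diagrams --- that the product monoid is commutative, via the identity $(A\otimes c_{B,A}\otimes B)\,c_{A\otimes B,A\otimes B} = (c_{A,A}\otimes c_{B,B})(A\otimes c_{B,A}\otimes B)$ (where symmetry, not mere braiding, is what you both use), followed by absorption of $c_{A,A}\otimes c_{B,B}$ by the commutativity of $A$ and $B$, and that the constraint $c_{A,B}:A\otimes B\to B\otimes A$ is a homomorphism of monoids. Your closing step, transporting the coherence and symmetry axioms along the faithful strict monoidal forgetful functor $\Common(\cC)\to\cC$, is left implicit in the paper but is the same standard observation.
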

\begin{proof}
It follows from the commutative diagram below that the tensor product of commutative monoids is commutative:
\xymatrix{ (A\otimes B)^{\otimes 2}\ar[rd]_{A\otimes c_{B,A}\otimes B} \ar @/^6ex/ [rrrdd]^{\mu{A\otimes B}}
\ar[dddd]_{c_{A\otimes B,A\otimes B}} & & &
\\
 & A^{\otimes 2}\otimes B^{\otimes 2} \ar[rrd]_{\mu_A\otimes\mu_B} \ar[dd]_{c_{A,A}\otimes c_{B,B}} & &
\\ & & & A\otimes B \\ & A^{\otimes 2}\otimes B^{\otimes 2} \ar[rru]^{\mu_A\otimes\mu_B} & & \\
(A\otimes B)^{\otimes 2}\ar[ru]^{A\otimes c_{B,A}\otimes B} \ar @/_6ex/ [rrruu]_{\mu{A\otimes B}} & & & }

Similarly the commutativity constraint $c_{A,A}:A\otimes A\to A\otimes A$ is a homomorphism of monoids:
\xymatrix{(A\otimes B)^{\otimes 2} \ar[rrrr]^{\mu_{A\otimes B}} \ar[drr]_{A\otimes c_{B,A}\otimes B} \ar[ddd]_{c_{A,A}\otimes
c_{B,B}} & & & & A\otimes B\ar[ddd]_{c_{A,B}} \\ & & A^{\otimes 2}\otimes B^{\otimes 2} \ar[d]_{c_{A\otimes B,A\otimes B}}
\ar[urr]_{\mu_A\otimes\mu_B} & & \\ & & B^{\otimes 2}\otimes A^{\otimes 2} \ar[drr]_{\mu_B\otimes\mu_A} & & \\ (B\otimes A)^{\otimes 2}
\ar[rrrr]^{\mu_{B\otimes A}} \ar[urr]_{B\otimes c_{A,B}\otimes A} & & & & A^{\otimes 2}}

\end{proof}
\begin{rem}\label{sympow}
\end{rem}
It follows from the previous lemma that for any $n$ the symmetric group $S_n$ acts by monoid homomorphisms on the
tensor power $A^{\otimes n}$. Thus (when it exists) the subobject of invariants $S^nA$ (the joint equaliser of all
elements of $S_n$) is a commutative submonoid in $A^{\otimes n}$. We call it the {\em $n$-th symmetric power} of $A$.

We conclude this section by mentioning some properties of the cobar construction of a monoid in a symmetric monoidal
category as well as the well-known description of the free symmetric monoidal category, containing a commutative
monoid. Lemma \ref{bractcc} works obviously in the case of a cosimplicial object of a monoid in a symmetric monoidal
category. If the braiding is a symmetry, the braid group actions on the components of the cosimplicial object reduce to
the symmetric group actions. Moreover, by lemma \ref{comsym} this action is by monoid automorphisms, giving rise to a
{\em symmetric} cosimplicial monoid.

As in the braided case, the statement of lemma \ref{bractcc} can be interpreted as a distributive law. Now it
distributes order preserving maps over permutations $$\Delta(m,n)\times\bP(n,n)\to \bP(m,m)\times\Delta(m,n)$$ for.
Here $\bP(n,n)$ is the symmetric group on $n$ elements. This distributive law was used in \cite{ds} to characterise the
free symmetric monoidal category, containing a monoid, as a mixture $\bP\Delta$ of the free symmetric category on one
object $\bP$ and the free monoidal category containing a monoid $\Delta$. Note that the free symmetric monoidal
category, containing a monoid, has another description as the category of finite sets and maps with linear orders on
fibres (see \cite{ds} for the reference). The free symmetric monoidal category, containing a commutative monoid, also
has a simple combinatorial model.
\begin{prop}
The category of finite sets $\bS$ with monoidal structure given by coproduct is the free symmetric monoidal category,
containing a commutative monoid.
\end{prop}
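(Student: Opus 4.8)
The plan is to exhibit $\bS$ as representing the 2-functor $\Common$ of commutative monoids in symmetric monoidal categories. Concretely, I would designate the one-element set $[1]\in\bS$ as the distinguished commutative monoid: its multiplication $\mu:[1]\sqcup[1]=[2]\to[1]$ and unit $\iota:\emptyset\to[1]$ are the unique maps, and commutativity $\mu c=\mu$ holds because the nontrivial symmetry on $[2]$ is an automorphism over the terminal object $[1]$, so it is swallowed by $\mu$. The associativity and unit axioms are immediate since all maps into $[1]$ from finite coproducts of copies of $[1]$ are forced. Thus $([1],\mu,\iota)$ is a genuine commutative monoid, and I must show that evaluation at $[1]$ gives an equivalence $\Moncat^{sym}(\bS,\cC)\to\Common(\cC)$, naturally in the symmetric monoidal category $\cC$.

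First I would construct the quasi-inverse. Given a commutative monoid $(A,\mu_A,\iota_A)$ in $\cC$, define a symmetric monoidal functor $F_A:\bS\to\cC$ on objects by $F_A(X)=A^{\otimes X}$, the tensor power indexed by the finite set $X$ (using Remark~\ref{sympow} and Lemma~\ref{comsym}, the commutativity of $A$ makes this well-defined up to canonical isomorphism independent of any ordering of $X$). On a map $f:X\to Y$ the functor sends $A^{\otimes X}\to A^{\otimes Y}$ to the tensor product over $y\in Y$ of the iterated multiplications $\mu(|f^{-1}(y)|):A^{\otimes f^{-1}(y)}\to A$, exactly as in the monoidal case of \S\ref{mmc} but now the fibres carry no order. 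The key point here is that commutativity is precisely what makes $F_A(f)$ independent of how one orders each fibre: any two orderings differ by a permutation of the tensor factors, which acts by a monoid automorphism equal to the identity after multiplying down to $A$, by the commutativity axiom $\mu c_{A,A}=\mu$ iterated. Functoriality $F_A(gf)=F_A(g)F_A(f)$ then follows from associativity of $\mu$, and the symmetric monoidal structure constraints $F_A(X)\otimes F_A(Y)\cong F_A(X\sqcup Y)$ are the evident identifications compatible with the symmetry.

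I would then check the two composites are naturally isomorphic to identities. Evaluating $F_A$ at $[1]$ returns $A$ with its original structure, giving one triangle on the nose. For the other, given a symmetric monoidal $F:\bS\to\cC$, the universal property must produce a monoidal isomorphism $F\cong F_{F[1]}$; this is built from the monoidal constraints of $F$ applied to the decomposition of each finite set $X$ as $\sqcup_{x\in X}[1]$, and naturality in maps $f:X\to Y$ reduces to $F$ sending the canonical fold maps $[n]\to[1]$ to the iterated multiplication, which holds because $F$ preserves the monoid $[1]$ and all such maps are generated under $\sqcup$ and composition by $\mu:[2]\to[1]$ and $\iota:\emptyset\to[1]$.

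The main obstacle is the generation/coherence step: verifying that every morphism of $\bS$ is built from the symmetries, the fold map $\mu$, and the unit $\iota$ by tensoring and composing, and that the only relations among these are consequences of the commutative-monoid axioms together with the symmetric monoidal axioms. Equivalently, one must show the induced functor $F_A$ is well-defined on \emph{all} maps (not just on a generating set) with no further coherence obstruction. I would handle this by factoring an arbitrary $f:X\to Y$ through its image as an order-free analogue of the epi--mono factorisation (surjections built from folds, injections built from coprojections $[0]\to[1]$, and bijections being symmetries), invoking the distributive law $\Delta(m,n)\times\bP(n,n)\to\bP(m,m)\times\Delta(m,n)$ of the preceding discussion to normalise; the quotient identifying $(\sigma\pi,\delta)\sim(\sigma,\delta)$ when $\pi$ stabilises the fibres of $\delta$ is exactly the passage from $\bP\Delta$ to $\bS$, and commutativity of $A$ is what makes $F_A$ descend along this quotient.
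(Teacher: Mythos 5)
Your proposal is correct and follows essentially the same route as the paper: designate $[1]$ with its unique fold and unit maps as the commutative monoid, define $F_A$ on objects by tensor powers and on morphisms by factoring each map as a bijection followed by an order-preserving map (equivalently, choosing orders on fibres), and use commutativity $\mu c_{A,A}=\mu$ to show independence of the choice, since two factorisations differ by a fibre-preserving permutation. The only difference is one of completeness, not of method: you additionally spell out the triangle identities and the isomorphism $F\cong F_{F[1]}$, which the paper leaves implicit.
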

\begin{proof}
The one-element set $[1]$ is a monoid in $\Sets$, with the unit map $[0]\to [1]$ and the multiplication given by the
epimorphism $[1]\otimes [1] = [2]\to [1]$. To show that $\Sets$ is freely generated by this monoid (as a symmetric
monoidal category) we need, for any commutative monoid $A$ in a symmetric (strict) monoidal category $\D$, to have a
symmetric monoidal functor $F:\Sets\to\D$, such that the monoid $F([1])$ is (isomorphic to) $A$. On objects (of the
skeletal model) of $\Sets$ the functor can be defined as follows $F([n]) = A^{\otimes n}$. To define its effect on
morphisms one can use the following factorisation property of morphisms of $\Sets$: any morphism can be decomposed
(non-uniquely) as a bijection followed by an order preserving map. To get such decomposition for a map $f:[m]\to [n]$
one need to fix linear orders on fibres of $f$, which gives a bijection $\sigma$ between $[m]$ and the ordered union
$\cup_{i\in [n]}[|f^{-1}(i)|]$ and an order preserving map $\delta:\cup_{i\in [n]}[|f^{-1}(i)|]\to [n]$. Now we can
define $F(f)$ as the composition of $F(\sigma):A^{\otimes m}\to A^{\otimes m}$ and $F(\delta):A^{\otimes m}\to
A^{\otimes n}$, where $F(\sigma)=\sigma$ is defined using the symmetric group action on $A^{\otimes m}$ and $F(\delta)$
is defined using the monoid structure on $A$ (as was explained in section \ref{mmc}). Note that the result is uniquely
defined (does not depend on the decomposition of $f$) since any too such decompositions differ by a fibre preserving
permutation $\pi$, and by commutativity of $A$, $\pi F(\delta) = F(\delta)$.
\end{proof}

\section{Braided and quasi-commutative monoids}
\subsection{Monoidal functors from the free braided category generated by a monoid}
Let $A$ be a monoid in a braided monoidal category $\cC$. Being morphisms in a braided category, $\iota$ and $\mu$ are
compatible with the braiding: $$c_{A,A}(\iota\otimes I) = I\otimes\iota,\ c_{A,A}(I\otimes\iota) = \iota\otimes I,$$
$$c_{A,A}(I\otimes\mu) = (\mu\otimes I)c_{A\otimes A,A},$$ $$c_{A,A}(\mu\otimes I) = (I\otimes\mu)c_{A,A\otimes A}.$$
Note that the axioms of braided monoidal category imply that $$c_{A\otimes A,A} = (c_{A,A}\otimes I)(I\otimes
c_{A,A}),\ c_{A,A\otimes A} = (I\otimes c_{A,A})(c_{A,A}\otimes I),$$ $$(c_{A,A}\otimes I)(I\otimes
c_{A,A})(c_{A,A}\otimes I) = (I\otimes c_{A,A})(c_{A,A}\otimes I)(I\otimes c_{A,A}).$$ Thus a monoidal functor
$F:\cC\to\D$ equips the monoid $F(A)$ with extra structure. The following definitions formalise this structure.
\begin{defi}
\end{defi}
Recall that a {\em Yang-Baxter operator} on an object $A$ is an isomorphism $R:A\otimes A\to A\otimes A$ satisfying
$$(R\otimes I)(I\otimes R)(R\otimes I) = (I\otimes R)(R\otimes I)(I\otimes R)$$ the so-called {\em Yang-Baxter
equation}. Let $A$ be a monoid in a monoidal category, with the multiplication $\mu:A\otimes A\to A$ and the unit map
$\iota:1\to A$. We call $A$ {\em braided} if there exists a Yang-Baxter operator $R$ on $A$ such that
\begin{equation}\label{ide}
R(\iota\otimes I) = I\otimes\iota,\ R(I\otimes\iota) = \iota\otimes I,
\end{equation}
\begin{equation}\label{mur}
R(I\otimes\mu) = (\mu\otimes I)(I\otimes R)(R\otimes I),
\end{equation}
\begin{equation}\label{mul}
R(\mu\otimes I) = (I\otimes\mu)(R\otimes I)(I\otimes R).
\end{equation}
We call $A$ {\em quasi-commutative} if in addition
\begin{equation}\label{com}
\mu R = \mu.
\end{equation}
A quasi-commutative monoid is {\em nearly commutative} if its Yang-Baxter operator satisfies the condition $R^2 = I$.

A {\em homomorphism of quasi-commutative monoids} $(A,R_A)\to(B,R_B)$ is a homomorphism of monoids $f:A\to B$ such
that $(f\otimes f)R_A = R_B(f\otimes f)$.

\begin{lem}\label{chqc}
Monoidal functors from the free braided monoidal category, containing a commutative monoid, into a monoidal category
$\D$ are in 1-1 correspondence with quasi-commutative monoids in $\D$.
\end{lem}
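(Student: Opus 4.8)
The plan is to extract everything from the two structural facts recalled just above the statement: that $\Vines$ is the free braided monoidal category on a commutative monoid, presented (as in Corollary \ref{dist}) with objects the natural numbers and with every morphism factored as a braid $\sigma\in\B(m,m)$ followed by an order preserving map $\delta\in\Delta(m,n)$, two factorisations being identified exactly when they differ by a braid lying in the block braid subgroup of the partition of $[m]$ into fibres of $\delta$ (equivalently, whose crossings all occur within single fibres). Because a strong monoidal functor is forced to send the tensor generator $[1]$ to a monoid and the object $n=[1]^{\otimes n}$ to $A^{\otimes n}$, the correspondence will be set up by reading each of the defining relations through $F$ in one direction and by defining $F$ on the presentation in the other.

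For the forward direction, given a monoidal functor $F:\Vines\to\D$ I would put $A=F[1]$, take $\mu,\iota$ to be the images of the monoid structure maps of $[1]$, and set $R=F(c_{[1],[1]})$. Since $F$ preserves composition, tensor and isomorphisms, $R$ is invertible and $(A,\mu,\iota)$ is a monoid; it then remains to apply $F$ to the relations that hold among $c,\mu,\iota$ in $\Vines$. The image of the braid relation $(c\otimes I)(I\otimes c)(c\otimes I)=(I\otimes c)(c\otimes I)(I\otimes c)$ is the Yang--Baxter equation for $R$; the images of $c(\iota\otimes I)=I\otimes\iota$ and $c(I\otimes\iota)=\iota\otimes I$ give (\ref{ide}); the images of the compatibilities of $c$ with $\mu$, after expanding $c_{A\otimes A,A}$ and $c_{A,A\otimes A}$ through the hexagon identities recalled at the beginning of this subsection, give exactly (\ref{mur}) and (\ref{mul}); and the image of commutativity $\mu c=\mu$ is (\ref{com}). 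Thus $(A,R)$ is quasi-commutative, and a monoidal natural transformation $F\to F'$ restricts, by naturality at $c_{[1],[1]}$, $\mu$ and $\iota$, to a homomorphism of quasi-commutative monoids.

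For the backward direction, let $(A,\mu,\iota,R)$ be quasi-commutative in $\D$. I would define $F(n)=A^{\otimes n}$ and, on a morphism presented by a pair $(\sigma,\delta)$, set $F(\sigma,\delta)=F_\Delta(\delta)\circ\rho(\sigma)$, where $\rho:\B(m,m)\to\mathrm{Aut}(A^{\otimes m})$ sends the $i$-th generator to $I^{\otimes i-1}\otimes R\otimes I^{\otimes m-i-1}$ and $F_\Delta$ is the monoidal functor $\Delta\to\D$ attached to the monoid $A$ in section \ref{mmc}. Here $\rho$ is a well-defined group homomorphism precisely because $R$ is an invertible Yang--Baxter operator (far-commutation holds since the factors are disjoint, and the braid relation is the Yang--Baxter equation). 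Three things must then be checked. First, independence of the factorisation: if $\pi$ has all of its crossings inside the fibres of $\delta$, then $F_\Delta(\delta)\rho(\pi)=F_\Delta(\delta)$, because on each fibre $F_\Delta(\delta)$ is an iterated multiplication $\mu(k)$ which, by associativity together with $\mu R=\mu$ from (\ref{com}), absorbs the braiding acting inside that block. Second, functoriality: composition in $\Vines$ is computed through the distributive law of Lemma \ref{bractcc}, and the prescribed images obey the very same rewriting rules, since (\ref{ide}), (\ref{mur}) and (\ref{mul}) are obtained from the relations of Lemma \ref{bractcc} by replacing $c_{A,A}$ with $R$. Third, monoidality: the tensor product in $\Vines$ is concatenation of braids and of order preserving maps, which $\rho$ and $F_\Delta$ respect on the nose.

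Finally I would check that the two assignments are mutually inverse: rebuilding a functor from the data $(A,\mu,\iota,R)$ extracted from $F$ yields a functor agreeing with $F$ on the generating braid $c$ and on $\mu,\iota$, hence everywhere, while extracting the data from a constructed $F$ returns the same quadruple. The main obstacle is exactly the well-definedness and functoriality step of the backward direction: one must verify that the quasi-commutativity axioms are precisely strong enough to make the braid-group action descend along the fibre-collapsing maps and to be compatible with the distributive law, while being careful that no stronger relation (such as $R^2=I$, which holds only for nearly commutative monoids) is secretly required — it is not, because the collapsing only ever uses braiding within a fibre, where $\mu R=\mu$ suffices. Everything else is a transport of relations already displayed in this section through the monoidal functor $F$.
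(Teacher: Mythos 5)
Your proof is correct and takes essentially the same route as the paper's: both rest on the braid-followed-by-order-preserving factorisation of morphisms of $\Vines$, define $F_A(\delta\sigma)=F_\Delta(\delta)\rho(\sigma)$ with $\rho$ the braid-group representation attached to $R$, and obtain well-definedness by absorbing fibre braids into iterated multiplications via associativity and $\mu R=\mu$. If anything you are more careful than the paper, whose uniqueness statement for the factorisation (``the permutation associated with $\pi$ stabilises the fibres of $\delta$''), read literally, would wrongly force $R^2=I$; your block-braid-subgroup formulation, together with the explicit functoriality check through the distributive law of Lemma \ref{bractcc}, supplies exactly the details the paper leaves implicit.
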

\begin{proof}

As was mentioned before the image under monoidal functor of a commutative monoid in a braided category is a
quasi-commutative monoid. In particular it gives a functor from the category of monoidal functors $\Mon(\Vines,\D)$
into the category of quasi-commutative monoids in $\D$. To prove that this is an equivalence we need to construct for a
quasi-commutative monoid $A\in\D$ a monoidal functor $F_A:\Vines\to\D$ which maps the generator $[1]$ of $\Vines$ into
$A$. On objects the functor is defined as follows $F_A([n]) = A^{\otimes n}$. To define it on morphisms one can use
factorisation property for morphisms of $\Vines$: $F_A(f) = F_A(\delta)F_A(\sigma)$ for a decomposition $f =
\delta\sigma$ into a braid $\sigma$ and an order preserving $\delta$. Here $F_A(\delta)$ is defined using the monoid
structure (see section \ref{mmc}) and $F_A(\sigma)$ is the image of the  homomorphism $B_n\to Aut(A^{\otimes n})$
associated with the Yang-Baxter operator $R$ (see for example \cite{ma}). Commutativity of the monoid $A$ implies that
the result $F_A(f)$ is well-defined (does not depend on the decomposition). Indeed, two decompositions differ by a
braid $\pi$, stabilising the fibres of $\delta$. Now the commutativity of $A$ implies that $F_A(\delta)F_A(\pi) =
F_A(\delta)$.
\end{proof}

\subsection{Main results}

The following theorem gives a characterisation of quasi-commutative monoids in purely algebraic terms.
\begin{theo}\label{main}
For an object $A$ in a monoidal category the following data are equivalent:
\newline
i) a structure of quasi-commutative monoid on $A$,
\newline
ii) a cosimplicial complex of monoids $A^*$ with $A^1=A$, satisfying the covering condition,
\newline
iii) a length 3 truncated cosimplicial complex of monoids $A^*$ with $A^1=A$, satisfying the covering condition.
\end{theo}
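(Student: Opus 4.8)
My plan is to establish the three equivalences by running the cycle (i)$\Rightarrow$(ii)$\Rightarrow$(iii)$\Rightarrow$(i); in particular this shows that the cosimplicial structure above level $3$ carries no extra information. For (i)$\Rightarrow$(ii), suppose $(A,R)$ is a quasi-commutative monoid in $\D$. By Lemma \ref{chqc} it corresponds to a monoidal functor $F_A\colon\Vines\to\D$ with $F_A([1])=A$. The generating commutative monoid $[1]\in\Vines$ gives, as recorded at the end of Section \ref{mbc}, a cosimplicial object $[1]^{\otimes *}$ in $\Mon(\Vines)$ all of whose structure maps are monoid homomorphisms and which satisfies the covering condition. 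Since $F_A$ is monoidal it sends monoids to monoids and covering maps to covering maps, and as a functor it preserves isomorphisms; hence $A^*:=F_A([1]^{\otimes *})$ is a cosimplicial monoid in $\D$ with $A^1=A$ satisfying the covering condition. The implication (ii)$\Rightarrow$(iii) is mere truncation: the covering maps of the length-$3$ truncation are exactly those covering maps of the full complex all of whose entries lie below level $4$, so the covering condition is inherited.

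The content is in (iii)$\Rightarrow$(i). For an order preserving $f\colon[k]\to[n]$ write $A^f\colon A^k\to A^n$ for the corresponding structure map, and let $e_i\colon[1]\to[n]$ be the inclusion with image $\{i\}$. For $n\le 3$ the maps $e_1,\dots,e_n$ have disjoint images covering $[n]$, so the covering map $\theta_n:=mult\circ(A^{e_1}\otimes\cdots\otimes A^{e_n})\colon A^{\otimes n}\to A^n$ is an isomorphism (with $A^1=A$ and $\theta_1=I$). I define the candidate Yang--Baxter operator as $R:=\theta_2^{-1}\psi_2$, where $\psi_2:=mult\circ(A^{e_2}\otimes A^{e_1})$ is the covering map with the two inputs interchanged; it is invertible because $\theta_2$ and $\psi_2$ are. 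Writing $\sigma\colon[2]\to[1]$ for the surjection, the simplicial identity $A^\sigma A^{e_i}=I$ together with the fact that $A^\sigma$ is a monoid homomorphism gives $A^\sigma\theta_2=\mu=A^\sigma\psi_2$, where $\mu=\mu_{A^1}$ is the given multiplication; this identifies $\mu$ with $A^\sigma\theta_2$ and yields the commutativity axiom (\ref{com}) for free, since $\mu R=A^\sigma\theta_2\theta_2^{-1}\psi_2=A^\sigma\psi_2=\mu$. The unit axioms (\ref{ide}) follow in the same spirit from the simplicial identities relating the $e_i$ to the unit insertions, and involve only the components $A^0,A^1,A^2$.

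The remaining axioms (\ref{mur}), (\ref{mul}) and the Yang--Baxter equation all equate morphisms whose source or target is $A^{\otimes 3}$, and this is exactly where the third component $A^3$ becomes indispensable (as the acknowledgement hints). My plan is to transport everything along $\theta_3\colon A^{\otimes 3}\xrightarrow{\sim}A^3$. For a permutation $\pi$ I form the covering isomorphism $\Phi_\pi:=mult\circ(A^{e_{\pi(1)}}\otimes A^{e_{\pi(2)}}\otimes A^{e_{\pi(3)}})$, with $\Phi_{\mathrm{id}}=\theta_3$, and I first check that $R\otimes I$ and $I\otimes R$ are the comparison maps $\theta_3^{-1}\Phi_s$ for the two adjacent transpositions $s$. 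Each of the two threefold composites in the Yang--Baxter equation then becomes $\theta_3^{-1}$ followed by an iterated product of the three unit-padded inputs $A^{e_i}$ in a fixed spatial arrangement; associativity of the multiplication $mult=\mu_{A^3}$ together with the simplicial identities shows that both composites equal the same such covering map, whence they agree, and the covering condition (invertibility of $\theta_3$ and of the intermediate covering maps) lets this identity be read back on $A^{\otimes 3}$. The compatibilities (\ref{mur}), (\ref{mul}) are handled identically, now reducing to identities between covering maps $A^{\otimes 3}\to A^2$ obtained by pulling a multiplication coface $A^\sigma$ through a product using that it is a monoid homomorphism. The main obstacle is precisely this bookkeeping: turning the braid relation into a single reassociation of an iterated multiplication on $A^3$, and verifying that the covering condition supplies all the inverses needed to justify each step.

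Finally I would verify that (i)$\Rightarrow$(ii)$\Rightarrow$(iii) and (iii)$\Rightarrow$(i) are mutually inverse, so the three kinds of data are genuinely in bijection. Applying the recovery formula $\theta_2^{-1}\psi_2$ to the cobar complex built in the first paragraph returns $F_A$ evaluated on the corresponding morphism of $\Vines$, which is the braiding $c_{[1],[1]}$; the model computation that $\theta_2^{-1}\psi_2=c_{A,A}$ for a commutative monoid in a braided category shows that one recovers the original $R=F_A(c_{[1],[1]})$. Conversely, starting from a truncated complex, the reconstructed $R$ rebuilds the cobar components $A^{\otimes n}$, and the isomorphisms $\theta_n$ identify these with the original $A^n$ compatibly with all structure maps, returning the given complex up to canonical isomorphism. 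This closes the cycle and establishes the equivalence of (i), (ii) and (iii).
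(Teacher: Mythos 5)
Your proposal is correct and is essentially the paper's own proof: the paper takes the very same operator $R=(\mu(\partial_0\otimes\partial_1))^{-1}\mu(\partial_1\otimes\partial_0)$ (your $\theta_2^{-1}\psi_2$), derives (\ref{com}) from the splitting $\sigma:A^2\to A^1$ exactly as you do, gets (\ref{ide}) from the $\partial_i$ being unit-preserving monoid maps, and proves (\ref{mur}), (\ref{mul}) and the Yang--Baxter equation by large commutative diagrams that are precisely your covering-map bookkeeping, with the hexagon of maps $R\otimes I$, $I\otimes R$ compared through the covering isomorphisms $\Phi_\pi$ into $A^3$. Two small calibrations: the paper verifies (\ref{mur}) and (\ref{mul}) entirely inside $A^2$ (compose with $\theta_2$ and use that the $\partial_i$ are monoid homomorphisms), so $A^3$ is genuinely needed only for the Yang--Baxter equation; and your ``then becomes'' tacitly uses not just $R\otimes I=\theta_3^{-1}\Phi_{(12)}$ and $I\otimes R=\theta_3^{-1}\Phi_{(23)}$ but the iterated intertwining relations $\Phi_\pi\circ(R\otimes I)=\Phi_{\pi(12)}$ (valid when $\pi(1)<\pi(2)$, which is satisfied at every step since both sides of the braid relation are reduced words --- otherwise one would falsely conclude $R^2=I$), and these iterated relations are exactly what the paper's diagram records.
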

\begin{proof}
The implication i)$\Rightarrow$ ii) is provided by the lemma \ref{chqc}. Indeed, a quasi-commutative monoid $A$ defines
a monoidal functor from the free monoidal category, containing a commutative monoid. The image of this functor is a
braided monoidal subcategory in which $A$ is a commutative monoid. Thus, according to section \ref{mbc}, we can form
the cosimplicial complex $A^{\otimes *}$ which will satisfy the covering condition.

The implication ii)$\Rightarrow$ iii) is obvious.

The least trivial part of the proof is the implication iii)$\Rightarrow$ i). For a truncated cosimplicial complex of
monoids $A^*$ of length 3, satisfying the covering condition, we will show that $A^1$ is quasi-commutative with respect
to the Yang-Baxter operator $R = (\mu(\partial_0\otimes\partial_1))^{-1}\mu(\partial_1\otimes\partial_0)$. We check the
defining equations for $(A^1,R)$ diagrammatically using the cosimplicial identities. Condition (\ref{com}) follows from
the diagram: $$\begin{xy}\xymatrix{ (A^1)^{\otimes 2} \ar[rr]^{\partial_1\otimes\partial_0} \ar[dd]_R \ar[dr]_\mu & &
(A^2)^{\otimes 2} \ar[dl]^{\mu\otimes\sigma\otimes\sigma} \ar[dr]^\mu \\ & A^1 & & A^2 \ar[ll]_\sigma \\ (A^1)^{\otimes
2} \ar[ru]^\mu \ar[rr]^{\partial_0\otimes\partial_1} & & (A^2)^{\otimes 2} \ar[ul]_{\mu\otimes\sigma\otimes\sigma}
\ar[ru]_\mu }\end{xy}$$

Conditions (\ref{ide}) are guaranteed by the diagrams: $$\begin{xy} (-2,0)*+!R{\xybox{\xymatrix{ (A^1)^{\otimes 2}
\ar[rr]^{\partial_1\otimes\partial_0} \ar[dd]_R & & (A^2)^{\otimes 2} \ar[dr]^\mu \\ & A^1 \ar[ul]^{\iota\otimes I}
\ar[ru]_{\iota\otimes\partial_0} \ar[rr]^{\partial_0} \ar[dr]^{\partial_0\otimes\iota} \ar[dl]_{I\otimes\iota} & & A^2
\\ (A^1)^{\otimes 2} \ar[rr]^{\partial_0\otimes\partial_1} & & (A^2)^{\otimes 2} \ar[ur]_\mu }}}
\POS(2,0)*+!L{\xybox{\xymatrix{ (A^1)^{\otimes 2} \ar[rr]^{\partial_1\otimes\partial_0} \ar[dd]_R & &
(A^2)^{\otimes 2} \ar[dr]^\mu \\ & A^1 \ar[ul]^{I\otimes\iota} \ar[ru]_{\partial_1\otimes\iota}
\ar[rr]^{\partial_1} \ar[dr]^{\partial_1\otimes\iota} \ar[dl]_{\iota\otimes I} & & A^2 \\ (A^1)^{\otimes 2}
\ar[rr]^{\partial_0\otimes\partial_1} & & (A^2)^{\otimes 2} \ar[ur]_\mu }}}\end{xy}$$

For condition (\ref{mur}) we have the following:

\xymatrix{ (A^1)^{\otimes 2} \ar[dddddddddd]_R \ar[rrrr]^{\partial_1\otimes\partial_0} & & & & (A^2)^{\otimes 2}
\ar[rddddd]^\mu \\ & (A^1)^{\otimes 3} \ar[dddd]_{R\otimes I} \ar[ul]^{I\otimes\mu}
\ar[dr]_{\partial_1\otimes\partial_0\otimes I} \ar[rr]^{\partial_1\otimes\partial_0\otimes\partial_0} & &
(A^2)^{\otimes 3} \ar[ur]^{I\otimes\mu} \ar[dr]^{\mu\otimes I} & & \\ & & (A^2)^{\otimes 2}\otimes A^1
\ar[ur]_{I\otimes I\otimes\partial_0} \ar[dr]_{\mu\otimes I} & & (A^2)^{\otimes 2} \ar[rddd]_\mu & \\ & & &
A^2\otimes A^1 \ar[ur]^{I\otimes\partial_0} & & \\ & & (A^2)^{\otimes 2}\otimes A^1 \ar[ur]^{\mu\otimes I}
\ar[dr]^{I\otimes I\otimes\partial_0} & & & \\ & (A^1)^{\otimes 3} \ar[ur]^{\partial_0\otimes\partial_1\otimes I}
\ar[dddd]_{I\otimes R} \ar[dr]_{I\otimes\partial_1\otimes\partial_0}
\ar[rr]^{\partial_0\otimes\partial_1\otimes\partial_0} & & (A^2)^{\otimes 3} \ar[ruuu]_{\mu\otimes I}
\ar[rddd]^{I\otimes\mu} & & A^2 \\ & & A^1\otimes (A^2)^{\otimes 2} \ar[dr]_{I\otimes\mu}
\ar[ur]_{\partial_0\otimes I\otimes I} & & & \\ & & & A^1\otimes A^2 \ar[dr]_{\partial_0\otimes I} & & \\ & &
A^1\otimes (A^2)^{\otimes 2} \ar[dr]^{\partial_0\otimes I\otimes I} \ar[ur]^{I\otimes\mu} & & (A^2)^{\otimes 2}
\ar[ruuu]^\mu & \\ & (A^1)^{\otimes 3} \ar[ru]^{I\otimes\partial_0\otimes\partial_1} \ar[dl]^{\mu\otimes I}
\ar[rr]_{\partial_0\otimes\partial_0\otimes\partial_1} & & (A^2)^{\otimes 3} \ar[ur]_{I\otimes\mu}
\ar[dr]_{\mu\otimes I} & & \\ (A^1)^{\otimes 2} \ar[rrrr]_{\partial_0\otimes\partial_1} & & & & (A^2)^{\otimes 2}
\ar[ruuuuu]_\mu }

\medskip
Analogously condition (\ref{mur}) follows from:

\xymatrix{ (A^1)^{\otimes 2} \ar[dddddddddd]_R \ar[rrrr]^{\partial_1\otimes\partial_0} & & & & (A^2)^{\otimes 2}
\ar[rddddd]^\mu \\ & (A^1)^{\otimes 3} \ar[dddd]_{I\otimes R} \ar[ul]^{\mu\otimes I}
\ar[dr]_{I\otimes\partial_1\otimes\partial_0} \ar[rr]^{\partial_1\otimes\partial_1\otimes\partial_0} & &
(A^2)^{\otimes 3} \ar[dr]^{I\otimes\mu} \ar[ur]^{\mu\otimes I} & & \\ & & A^1\otimes (A^2)^{\otimes 2}
\ar[ur]_{\partial_1\otimes I\otimes I} \ar[dr]_{I\otimes\mu} & & (A^2)^{\otimes 2} \ar[rddd]_\mu & \\ & & &
A^2\otimes A^1 \ar[ur]^{\partial_1\otimes I} & & \\ & & (A^2)^{\otimes 2}\otimes A^1 \ar[ur]^{I\otimes\mu}
\ar[dr]^{\partial_1\otimes I\otimes I} & & & \\ & (A^1)^{\otimes 3} \ar[ur]^{I\otimes\partial_0\otimes\partial_1}
\ar[dddd]_{R\otimes I} \ar[dr]_{\partial_1\otimes\partial_0\otimes I}
\ar[rr]^{\partial_1\otimes\partial_0\otimes\partial_1} & & (A^2)^{\otimes 3} \ar[rddd]^{\mu\otimes I}
\ar[ruuu]_{I\otimes\mu} & & A^2 \\ & & (A^2)^{\otimes 2}\otimes A^1 \ar[dr]_{\mu\otimes I} \ar[ur]_{I\otimes
I\otimes\partial_1} & & & \\ & & & A^2\otimes A^1 \ar[dr]_{I\otimes\partial_1} & &
\\ & & (A^2)^{\otimes 2}\otimes A^1 \ar[dr]^{I\otimes I\otimes\partial_1} \ar[ur]^{\mu\otimes I} & & (A^2)^{\otimes 2}
\ar[ruuu]^\mu & \\ & (A^1)^{\otimes 3} \ar[ru]^{\partial_0\otimes\partial_1\otimes I} \ar[dl]^{I\otimes\mu}
\ar[rr]_{\partial_0\otimes\partial_1\otimes\partial_1} & & (A^2)^{\otimes 3} \ar[ur]_{\mu\otimes I}
\ar[dr]_{I\otimes\mu} & & \\ (A^1)^{\otimes 2} \ar[rrrr]^{\partial_0\otimes\partial_1} & & & & (A^2)^{\otimes 2}
\ar[ruuuuu]_\mu }

\medskip
Finally for the Yang-Baxter equation we have the following:

$$ \xygraph{ !{0;/r3.3pc/:;/u6.5pc/::}[]*+{(A^1)^{\otimes 3}} (
  ( :[dl]*+{A^1\otimes(A^2)^{\otimes 2}}_{I\otimes\partial_1\otimes\partial_0}
    ( :[dr]*+{(A^3)^{\otimes 3}}="0,2" _{\partial_2\partial_1\otimes\partial_0\otimes\partial_0}
      ( :[dl]*+{(A^3)^{\otimes 2}}="-1,1" ^{I\otimes\mu}
      ,
        :[dr]*+{(A^3)^{\otimes 2}}="1,1" ^{\mu\otimes I}
      )
    ,
      :[dl]*+{A^1\otimes A^2}="-2,2" _{I\otimes\mu}
      :"-1,1" ^{\partial_2\partial_1\otimes\partial_0}
      :[dr]*+{A^3}="0,0" ^{\mu}
    )
  ,
    :[dr]*+{(A^2)^{\otimes 2}\otimes A^1} ^{\partial_1\otimes\partial_0\otimes I}
    ( :[dr]*+{A^2\otimes A^1}="2,2" ^{\mu\otimes I}
      :"1,1" ^{\partial_2\otimes\partial_1\partial_0}
      : "0,0" ^{\mu}
    ,
      :"0,2" ^{\partial_2\otimes\partial_2\otimes\partial_1\partial_0}
    )
  )
  :[d(2)l(6)]*+{(A^1)^{\otimes 3}} _{I\otimes R}
  ( :[rr]*+{A^1\otimes(A^2)^{\otimes 2}} ^{I\otimes\partial_0\otimes\partial_1}
    ( :"-2,2" ^{I\otimes\mu}
    ,
      :[dr]*+{(A^3)^{\otimes 3}}="-3,1" ^{\partial_2\partial_1\otimes\partial_0\otimes\partial_0}
      ( :"-1,1" ^{I\otimes\mu}
      ,
        :[dr]*+{(A^3)^{\otimes 2}}="-2,0" ^{\mu\otimes I}
        :"0,0" ^\mu
      )
    )
  ,
    :[dr]*+{(A^2)^{\otimes 2}\otimes A^1} ^{\partial_1\otimes\partial_0\otimes I}
    ( :"-3,1" ^{\partial_1\otimes\partial_1\otimes\partial_2\partial_0}
    ,
      :[dr]*+{A^2\otimes A^1}="-4,0" ^{\mu\otimes I}
      :"-2,0" ^{\partial_1\otimes\partial_2\partial_0}
    )
  )
  :[d(4)]*+{(A^1)^{\otimes 3}} _{R\otimes I}
  ( :[ur]*+{(A^2)^{\otimes 2}\otimes A^1} _{\partial_0\otimes\partial_1\otimes I}
    ( :"-4,0" _{\mu\otimes I}
    ,
      :[rr]*+{(A^3)^{\otimes 3}}="-3,-1" ^{\partial_1\otimes\partial_1\otimes\partial_2\partial_0}
      ( :"-2,0" _{\mu\otimes I}
      ,
        :[rr]*+{(A^3)^{\otimes 2}}="-1,-1" ^{I\otimes\mu}
        :"0,0" _\mu
      )
    )
  ,
    :[rr]*+{A^1\otimes(A^2)^{\otimes 2}} ^{I\otimes\partial_1\otimes\partial_0}
    ( :"-3,-1" _{\partial_0\partial_0\otimes\partial_2\otimes\partial_2}
    ,
      :[rr]*+{A^1\otimes A^2}="-2,-2" ^{I\otimes\mu}
      :"-1,-1" _{\partial_0\partial_0\otimes\partial_2}
    )
  )
  :[d(2)r(6)]*+{(A^1)^{\otimes 3}}="0,-4" _{I\otimes R}
  ( :[ul]*+{A^1\otimes(A^2)^{\otimes 2}} ^{I\otimes\partial_0\otimes\partial_1}
    ( :"-2,-2" ^{I\otimes\mu}
    ,
      :[ur]*+{(A^3)^{\otimes 3}}="0,-2" ^{\partial_0\partial_0\otimes\partial_2\otimes\partial_2}
      ( :"-1,-1" _{I\otimes\mu}
      ,
        :[ur]*+{(A^3)^{\otimes 2}}="1,-1" _{\mu\otimes I}
        :"0,0" _\mu
      )
    )
  ,
    :[ur]*+{(A^2)^{\otimes 2}\otimes A^1} _{\partial_0\otimes\partial_1\otimes I}
    ( :"0,-2" _{\partial_0\otimes\partial_0\otimes\partial_2\partial_1}
    ,
      :[ur]*+{A^2\otimes A^1}="2,-2" _{\mu\otimes I}
      :"1,-1" _{\partial_0\otimes\partial_2\partial_1}
    )
  )
,
  :[d(2)r(6)]*+{(A^1)^{\otimes 3}} ^{R\otimes I}
  ( :[ll]*+{(A^2)^{\otimes 2}\otimes A^1} _{\partial_0\otimes\partial_1\otimes I}
    ( :"2,2" _{\mu\otimes I}
    ,
      :[dl]*+{(A^3)^{\otimes 3}}="3,1" ^{\partial_2\otimes\partial_2\otimes\partial_1\partial_0}
      ( :"1,1" _{\mu\otimes I}
      ,
        :[dl]*+{(A^3)^{\otimes 2}}="2,0" ^{I\otimes\mu}
        :"0,0" _\mu
      )
    )
  ,
    :[dl]*+{A^1\otimes(A^2)^{\otimes 2}} ^{I\otimes\partial_1\otimes\partial_0}
    ( :"3,1" _{\partial_0\partial_1\otimes\partial_1\otimes\partial_1}
    ,
      :[dl]*+{A^1\otimes A^2}="4,0" ^{I\otimes\mu}
      :"2,0" _{\partial_0\partial_1\otimes\partial_1}
    )
  )
  :[d(4)]*+{(A^1)^{\otimes 3}} ^{I\otimes R}
  ( :[ul]*+{A^1\otimes(A^2)^{\otimes 2}} _{I\otimes\partial_0\otimes\partial_1}
    ( :"4,0" _{I\otimes\mu}
    ,
      :[ll]*+{(A^3)^{\otimes 3}}="3,-1" _{\partial_0\partial_1\otimes\partial_1\otimes\partial_1}
      ( :"2,0" _{I\otimes\mu}
      ,
        :"1,-1" _{\mu\otimes I}
      )
    )
  ,
    :[ll]*+{(A^2)^{\otimes 2}\otimes A^1} _{\partial_0\otimes\partial_1\otimes I}
    ( :"3,-1" _{\partial_0\otimes\partial_0\otimes\partial_2\partial_1}
    ,
      :"2,-2" _{\mu\otimes I}
    )
  )
  : "0,-4" ^{R\otimes I}
)
}$$ \end{proof}

The previous characterisation of quasi-commutative monoids in terms of (truncated) cosimplicial objects can be easily
generalisied to braided monoids and nearly commutative monoids.
\begin{theo}\label{main1}
A structure of a braided monoid on an object $A$ of a monoidal category $\cC$ is equivalent to
\newline
i) a semi-cosimplicial complex of monoids $A^*$ in $\cC$ with $A^1=A$, satisfying the covering condition,
\newline
ii) a length 3 truncated semi-cosimplicial complex of monoids $A^*$ with $A^1=A$, satisfying the covering condition.
\end{theo}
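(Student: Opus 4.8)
The plan is to run the argument of Theorem \ref{main} essentially unchanged, guided by the observation that condition (\ref{com}) is the only braided-monoid axiom whose verification invoked a coface map $\sigma_i$. For the forward direction (braided monoid $\Rightarrow$ (i)) I would first record the braided analogue of Lemma \ref{chqc}: a braided monoid $(A,R)$ in $\D$ determines a monoidal functor $F_A$ from the free braided monoidal category containing a monoid, $\B\Delta$, into $\D$, with $F_A([1]) = A$ and $F_A(c_{[1],[1]}) = R$. On objects $F_A([n]) = A^{\otimes n}$; on morphisms one uses the decomposition of Corollary \ref{dist} into a braid followed by an order-preserving map, sending the braid through the representation $B_n \to Aut(A^{\otimes n})$ determined by $R$ and the order-preserving part through the monoid structure of $A$ as in Section \ref{mmc}. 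Unlike in Lemma \ref{chqc}, this decomposition is unique, so $F_A$ is automatically well defined and no commutativity is required; functoriality is precisely the compatibility of the distributive law of Lemma \ref{bractcc} with the braided-monoid axioms (\ref{ide})--(\ref{mul}). Since the generating monoid $[1]$ of $\B\Delta$ is a monoid in a genuine braided category, Section \ref{mbc} furnishes it with a semi-cosimplicial complex of monoids satisfying the covering condition, and transporting this diagram along $F_A$ --- which preserves $\iota$, $\mu$, the codegeneracies $\partial_i$, hence all covering maps, and preserves isomorphisms --- produces the required semi-cosimplicial complex of monoids in $\D$ with $A^1 = A$ and the covering condition.

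The implication (i) $\Rightarrow$ (ii) is the obvious truncation. For (ii) $\Rightarrow$ braided monoid I would set $R = (\mu(\partial_0 \otimes \partial_1))^{-1}\mu(\partial_1 \otimes \partial_0)$, exactly as in Theorem \ref{main}; it is defined because the covering map $\mu(\partial_0 \otimes \partial_1) : (A^1)^{\otimes 2} \to A^2$ is invertible by the covering condition, and the length-$3$ truncation supplies precisely the components $A^1, A^2, A^3$ that the verification will use. The crucial point is that the diagrams proving (\ref{ide}), (\ref{mur}), (\ref{mul}) and the Yang-Baxter equation in Theorem \ref{main} are assembled solely from the codegeneracies $\partial_i$ and the multiplications $\mu$ and never mention a coface map $\sigma_i$; they therefore remain valid for a merely semi-cosimplicial complex and establish all the braided-monoid axioms. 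The one diagram that did use $\sigma$ was the one verifying (\ref{com}), and (\ref{com}) is exactly the condition distinguishing quasi-commutative from braided monoids, so omitting the cofaces omits precisely this axiom and leaves us with a braided monoid.

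I do not expect a real obstacle, since the heavy diagrammatic computation is inherited verbatim from Theorem \ref{main}. The only genuinely new ingredient is the braided version of Lemma \ref{chqc}, and its sole delicate point --- well-definedness of $F_A$ --- is actually easier here than in the commutative case, because the braid/order-preserving factorisation in $\B\Delta$ is unique (Corollary \ref{dist}) rather than unique only up to fibre-preserving braids. The residual care needed is simply to check, by inspecting the diagrams of Theorem \ref{main}, that (\ref{com}) is indeed the unique place where a coface map intervenes.
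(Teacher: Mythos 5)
Your proposal is correct and takes essentially the same route as the paper, whose entire proof of Theorem \ref{main1} is the observation that in the proof of Theorem \ref{main} the map $\sigma$ enters only in the verification of (\ref{com}), so the remaining diagrams (built solely from the $\partial_i$'s and $\mu$) establish the braided-monoid axioms, with the forward direction being the semi-cosimplicial analogue of i)$\Rightarrow$ii) there. Your explicit treatment of that forward direction via $\B\Delta$, Corollary \ref{dist} and the distributive law of Lemma \ref{bractcc} is precisely what the paper leaves implicit.
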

\begin{proof}
The only place in the proof where we used a degeneracy map was in the verification of the commutativity.
\end{proof}

\begin{theo}\label{main2}
A structure of an nearly commutative monoid on object $A$ of a monoidal category $\cC$ is equivalent to
\newline
i) a symmetric cosimplicial complex of monoids $A^*$ in $\cC$ with $A^1=A$, satisfying the covering condition,
\newline
ii) a length 3 truncated symmetric cosimplicial complex of monoids $A^*$ with $A^1=A$, satisfying the covering
condition.
\end{theo}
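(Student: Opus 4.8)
The plan is to bootstrap off Theorem \ref{main} in the same spirit as the one-line derivation of Theorem \ref{main1}: all of the monoid and cosimplicial combinatorics is already carried out there, and the only genuinely new point is to match the single extra hypothesis on each side, namely the identity $R^2=I$ for the Yang--Baxter operator against the symmetric refinement of the cosimplicial monoid. By Theorem \ref{main}, forgetting the symmetric group actions sends a (length $3$ truncated) symmetric cosimplicial monoid satisfying the covering condition to a quasi-commutative structure on $A^1$, with $R=(\mu(\partial_0\otimes\partial_1))^{-1}\mu(\partial_1\otimes\partial_0)$; conversely, a nearly commutative monoid yields a cosimplicial monoid by the cobar construction. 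So it suffices to prove that, under this correspondence, the symmetric structure is present exactly when $R^2=I$, and that it is already pinned down by levels $\leq 3$.

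First I would record the two covering isomorphisms $\phi=\mu(\partial_0\otimes\partial_1)$ and $\psi=\mu(\partial_1\otimes\partial_0)$ from $A^1\otimes A^1$ to $A^2$, invertible by the covering condition, so that $R=\phi^{-1}\psi$ and hence the transported operator $\phi R\phi^{-1}=\psi\phi^{-1}$ is an endomorphism of $A^2$. The key identification is that this endomorphism is precisely the action $s_1$ of the transposition in $S_2$: the compatibility relations of Lemma \ref{bractcc} force $s_1$ to interchange the two coface maps $\partial_0,\partial_1\colon A^1\to A^2$, so that, $s_1$ being a monoid automorphism of $A^2$, one gets $s_1\phi=\mu(s_1\partial_0\otimes s_1\partial_1)=\mu(\partial_1\otimes\partial_0)=\psi$, i.e. $s_1=\psi\phi^{-1}=\phi R\phi^{-1}$. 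This is the heart of the matter and the computation I would carry out in full.

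With $s_1=\phi R\phi^{-1}$ in hand the equivalence is immediate in both directions. From a nearly commutative monoid, $R^2=I$ gives $s_1^2=I$, so the braid-group actions recovered on every $A^{\otimes n}$ (through the homomorphism $B_n\to\mathrm{Aut}(A^{\otimes n})$ built from $R$, as in Lemma \ref{chqc}) factor through $S_n$; transporting along the covering isomorphisms $A^{\otimes n}\cong A^n$ supplies the symmetric actions together with the relations of Lemma \ref{bractcc}, yielding a symmetric cosimplicial monoid. Conversely, the symmetric hypothesis at level $2$ says $s_1^2=I$, which transports back to $R^2=I$, so $A^1$ is nearly commutative. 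The reduction to a length $3$ truncation is exactly as in Theorem \ref{main}, since $R$, its square, and all the symmetric compatibilities used live in levels $\leq 3$.

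The step I expect to be the real obstacle is not the bijection of data but the verification, via the cosimplicial identities together with the covering condition, that the symmetric refinement imposes \emph{nothing beyond} $R^2=I$: one must check that the $S_n$-relations on the higher components $A^n$ are already consequences of the braid relations (present by Theorem \ref{main}) once $s_1^2=I$ is assumed, so that no new constraint appears at higher levels. As in Theorem \ref{main}, this is a localised diagram chase, here expressing $s_1^2$ as a composite of covering maps and cosimplicial faces that collapses to the identity precisely under the symmetric hypothesis; the finiteness afforded by the covering condition is what confines it to levels $2$ and $3$.
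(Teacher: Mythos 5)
The paper itself gives no proof of Theorem \ref{main2} -- it is stated as a direct variant of Theorems \ref{main} and \ref{main1} -- so there is nothing to compare line by line; judged on its own, your proposal is correct and supplies exactly the argument the paper's surrounding text intends. Your key step, identifying the transposition $s_1$ acting on $A^2$ with $\psi\phi^{-1}=\phi R\phi^{-1}$ (forced by $s_1\partial_0=\partial_1$, $s_1\partial_1=\partial_0$ together with $s_1$ being a monoid map), is precisely the mechanism the paper deploys elsewhere: the automorphism $\tau$ of equation (\ref{tau}) in the group-theoretic application, and the operator $\tau$ with $\tau\partial_0=\partial_1$, $\tau\partial_1=\partial_0$, $\sigma\tau=\sigma$ in the Hopf--Galois example, both of which detect near-commutativity by $\tau^2=I$. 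Two small adjustments. First, your closing paragraph overestimates the remaining difficulty: since $S_n$ is the quotient of $B_n$ by the relations $x_i^2=1$, and the braid action built from $R$ sends $x_i^2$ to $I^{\otimes i-1}\otimes R^2\otimes I^{\otimes n-i-1}$, the action factors through $S_n$ the moment $R^2=I$; no diagram chase at higher levels is needed. Second, in the direction from a nearly commutative monoid to a symmetric cosimplicial monoid you should also justify that the resulting $S_n$-actions are by monoid automorphisms, since that is part of the paper's notion of symmetric cosimplicial monoid: with $R^2=I$ the braiding restricted to tensor powers of $A$ becomes a symmetry, so Lemma \ref{comsym} applies and closes this gap.
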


\section{Some examples}
Below we list some examples of quasi-commutative monoids in different monoidal categories (mostly of algebraic origin).
In some cases it is easier to give the quasi-commutative structure in a straightforward way by presenting the
Yang-Baxter operator and checking the identities; in other cases it is much easier to define the cosimplicial complex
and check the covering condition. Finally relations (monoidal functors) between ambient monoidal categories allow us to
turn one series of examples into another, or sometimes produce new examples.

\begin{exam}\label{gr} Groups are quasi-commutative.
\end{exam}
Here the category is the category of sets $\Sets$ with monoidal structure given by cartesian product. A monoid in this
category is just a monoid. In particular any group is a monoid in $\Sets$. For a group $G$ define a cosimplicial
complex $G^*$ with $G^n = G^{\times n}$ the cartesian product and codegeneration and coface maps: $$\sigma_i:G^{\times
n}\to G^{\times n-1},\quad
\partial_i:G^{\times n}\to G^{\times n+1},$$ where $$\sigma_i(x_1,...,x_n) =
(x_1,...,x_{i+1},x_{i+3},...,x_n),\ i=0,...,n-2,$$ $$\partial_i(x_1,...,x_n) =
(x_1,...,x_i,x_{i+1},x_{i+1},x_{i+2},...,x_n),\ i=0,...,n-1,$$ $$\partial_n(x_1,...,x_n) = (x_1,...,x_n,e).$$ Note that
the above formulas do not involve inverses thus defining a cosimplicial structure on $G^{\times *}$ for any monoid $G$.
But the covering condition is fulfilled only when $G$ is a group. Indeed, the bijectivity of the composition $$G\times
G\stackrel{\partial_0\times\partial_1}{\longrightarrow} G^{\times 4}\stackrel{\mu_{G^{\times 2}}}{\longrightarrow}
G^{\times 2}$$ implies that for any $a,b\in G$ the system $$xy = a,\ x = b$$ has unique solution which means that $b$
is invertible. In particular, the inverse to the map $\mu_{G^{\times 2}}(\partial_0\otimes\partial_1)$ has the form
$(x,y)\mapsto (y,y^{-1}x)$ and the Yang-Baxter operator is given by
\begin{equation}\label{gyb}
R:G\times G\to G\times G,\quad R(x,y) = (y,y^{-1}xy).
\end{equation}
Note that
\begin{equation}\label{r2}
R^2(x,y) = R(y,y^{-1}xy) = (y^{-1}xy,(y^{-1}xy)^{-1}y(y^{-1}xy)) = (y^{-1}xy,y^{-1}x^{-1}yxy)
\end{equation}
so that a group $G$ with the Yang-Baxter operator $R$ is nearly commutative if and only if $G$ is abelian, in which
case $R$ is just the symmetry in $\Sets$.

The functor $\Sets\to \Vect$, sending a set $X$ into the vector space $k[X]$ spanned by it, is monoidal. Thus for any
group the groups algebra $k[G]$ is quasi-commutative in $\Vect$ with respect to the Yang-Baxter operator $R$.

\begin{exam}\label{ex} Central extensions are quasi-commutative.
\end{exam}
This is a modification of the previous example. Let $A$ be an abelian group and $\Sets_f^A$ is the category of faithful
$A$-sets. Define the monoidal product of $A$-sets $X$ and $Y$ to be $X\times_A Y = (X\times Y)/A$ the quotient set of
the cartesian product by the anti-diagonal action of $A$. In another words $X\times_A Y$ is the set of pairs $(x,y)$
modulo relations $(ax,y) = (x,ay)$ with obvious $A$-action $a(x,y) = (ax,y) = (x,ay)$. A monoid in the category
$\Sets_f^A$ with the tensor product $\times_A$ is a monoid (in $\Sets$) together with a central inclusion $A\to M$. In
particular any central extension of groups $A\subset G$ is a monoid in $\Sets_f^A$. Note that for a central extension
of groups $A\to G$ the Yang-Baxter operator (\ref{gyb}) on the group $G$ preserves the relations $(ax,y) = (x,ay)$ and
the $A$-action on $G\times_A G$ thus making the extension $A\subset G$ a quasi-commutative monoid in $\Sets_f^A$.

An extension $A\to G$ is nearly commutative iff the quotient group $G/A$ is abelian. Indeed, by (\ref{r2}) $R^2$ is the
identity on $G\times_A G$ iff for any $x,y\in G$ $(y^{-1}xy,y^{-1}x^{-1}yxy) = (a^{-1}x,ay)$ for some $a\in A$. Solving
this system we can rewrite the condition as $[x,y] \in A$, where $[x,y] = xyx^{-1}y^{-1}$ is the commutator. So if the
extension $A\to G$ is nearly commutative $[G,G]\subset A$ and $G/A$ becomes abelian. Conversely, if $G/A$ is abelian
then $[x,y] = [x,xy]\in A$ for any $x,y\in G$.

Fix a homomorphism $\chi:A\to k^*$ into the invertible elements of the field $k$. Define the $\chi$-span $k_\chi[X]$ of
a (faithful) $A$-set as the quotient of $k[X]$ modulo the relations $a(x) = \chi(a)x$ for $a\in A$, $x\in X$. This
construction is clearly functorial, the functor $k_\chi[\ ]:\Sets_f^A\to\Vect_k$ is monoidal (transforming product
$\times_A$ of $A$-sets into tensor product of vector spaces). Again monoids in $\Sets_f^A$ give rise to algebras over
$k$ and quasi-commutative monoids correspond to quasi-commutative algebras. In particular according to the example
(\ref{ex}) for any central extensions of groups $A\to G$ the algebra $k_\chi[G]$ is quasi-commutative. Note that the
algebra $k_\chi[G]$ is a {\em skew group algebra} $k[G/A,\alpha]$, where the 2-cocycle $\alpha\in Z^2(G/A,k^*)$ is
$\chi(\gamma)$ with $\gamma\in Z^2(G/A,A)$ being a cocycle of the central extension $A\to G\to G/A$. Recall that for a
2-cocyle $\alpha\in Z^2(S,k^*)$ the skew group algebra $k[S,\alpha]$ is the vector space over $k$ spanned by $e_s$ for
$s\in S$ with the product $$e_se_t = \alpha(s,t)e_{st}.$$ Note that the 2-cocycle condition is equivalent to the
associativity of this multiplication and the isomorphism class of a skew group algebra depends only on the cohomology
class of the 2-cocycle. Our construction establishes quasi-commutativity of a skew group algebra $k[S,\alpha]$ with the
Yang-Baxter operator:
\begin{equation}\label{ceyb}
R(e_s\otimes e_t) = \frac{\alpha(s,t)}{\alpha(t,t^{-1}st)}(e_t\otimes e_{t^{-1}st}).
\end{equation}

\begin{exam}Hopf algebras are quasi-commutative.
\end{exam}
Let $H$ be a Hopf algebra with a unit map $\iota$, coproduct $\triangle$, counit $\varepsilon$ and an invertible
antipode $S$. Define a cosimplicial complex $H^*$ with $H^n = H^{\otimes n}$ and the coface $\sigma_i:H^{\otimes n}\to
H^{\otimes n-1},\ i=0,...,n-2$ and codegeneration maps $\partial_j:H^{\otimes n}\to H^{\otimes n+1},\ j=0,...,n$:
$$\sigma_i = I_{i+1}\otimes\varepsilon\otimes I_{n-i-2},\quad \partial_j = \left\{\begin{array}{ll}
I_j\otimes\triangle\otimes I_{n-j-1}, & j<n+1\\ I_n\otimes\iota, & j=n+1 \end{array}\right.$$

The maps $\mu_{H^{\otimes 2}}(\partial_0\otimes\partial_1),\mu_{H^{\otimes 2}}(\partial_0\otimes\partial_1):H^{\otimes
2}\to H^{\otimes 2}$ have the form $$g\otimes h\mapsto \triangle(g)(h\otimes 1),\quad g\otimes h\mapsto (g\otimes
1)\triangle(h)$$ correspondingly. Their invertibility follows from the invertibility of the antipode. For example, the
inverse of the first map is given by $$g\otimes h\mapsto (I\otimes S^{-1})t\triangle(h)(1\otimes g),$$ where $t$ is the
transposition of tensor factor and $t\triangle$ is the opposite coproduct.

Thus any Hopf algebra $H$ with invertibe antipode is quasi-commutative with respect to the Yang-Baxter operator
\begin{equation}\label{hyb}
R(g\otimes h) = \sum_{(h)}h_{(2)}\otimes S^{-1}(h_{(1)})gh_{(0)}.
\end{equation}
Here we use so-called Sweedler's notation (see \cite{sw}) according to which $$\triangle(f) = \sum_{(f)}f_{(0)}\otimes
f_{(1)},\quad (\triangle\otimes I)\triangle(f) = (I\otimes\triangle)\triangle(f) = \sum_{(f)}f_{(0)}\otimes
f_{(1)}\otimes f_{(2)},...$$ Similarly to the group case, the Yang-Baxter operator is involutive (i.e. the structure is
nearly-commmutative) if and only if the Hopf algebra $H$ is commutative. Note that in that case the Yang-Baxter
operator is the ordinary transposition.

\begin{exam} Galois algebras are quasi-commutative.
\end{exam}
Let $H$ be a Hopf algebra and $A$ be a (right) $H$-comodule algebra with the coaction $\psi:A\to A\otimes H$ (see
\cite{mon}). Define a cosimplicial complex with $n$-th term $A\otimes H^{\otimes n-1}$ and the coface
$\sigma_i:A\otimes H^{\otimes n-1}\to A\otimes H^{\otimes n-2},\ i=0,...,n-2$ and codegeneration maps
$\partial_j:A\otimes H^{\otimes n-1}\to A\otimes H^{\otimes n},\ j=0,...,n$: $$\sigma_i =
I_{i+1}\otimes\varepsilon\otimes I_{n-i-2},\quad \partial_j = \left\{ \begin{array}{ll} \psi\otimes I_{n-1}, & j=0\\
I_j\otimes\triangle\otimes I_{n-j-1}, & 0<j<n+1\\ I_n\otimes\iota, & j=n+1 \end{array}\right. $$ The maps
$\mu_{A\otimes H}(\partial_0\otimes\partial_1),\mu_{A\otimes H}(\partial_0\otimes\partial_1):H^{\otimes 2}\to
H^{\otimes 2}$ have the form $$a\otimes b\mapsto \psi(a)(b\otimes 1),\quad a\otimes b\mapsto (a\otimes 1)\psi(h)$$
correspondingly. Their invertibility is equivalent to the {\em Galois} property of the coaction \cite{mon}. Since it is
rather hard to invert these maps, the general form of the corresponding Yang-Baxter operator is out of reach. However,
it is much easier to find an operator $\tau:A\otimes H\to A\otimes H$ satisfying $$\tau\partial_0 = \partial_1,\quad
\tau\partial_1 = \partial_0,\quad \sigma\tau = \sigma.$$ Indeed, $\tau(a\otimes h) = \psi(a)(1\otimes S(h))$ solves
these equations. The explicit form of $\tau$ allows us to see when the quasi-commutative structure on $A$ is
nearly-commutative. It is straightforward that $\tau^2 = I$ if and only if the Hopf algebra $H$ is commutative. In
contrast to the previous example the Yang-Baxter operator corresponding to a non-trivial Galois algebra over
commutative Hopf algebra can be non-trivial.

For instance, let $H=k[G]$ be the group algebra of a group $G$. It is well-known (see \cite{mon}) that a
$k[G]$-comodule Galois algebra is a skew group algebra $k[G,\alpha]$ with coaction given by $\psi(e_g) = e_g\otimes g$.
The maps $\mu_{A\otimes H}(\partial_0\otimes\partial_1),\mu_{A\otimes H}(\partial_0\otimes\partial_1):H^{\otimes 2}\to
H^{\otimes 2}$ now take the form $$e_f\otimes e_g\mapsto e_fe_g\otimes f = \alpha(f,g)e_{fg}\otimes f,\quad e_f\otimes
e_g\mapsto e_fe_g\otimes g = \alpha(f,g)e_{fg}\otimes g.$$ The inverse of the first map can be given explicitly
$$e_f\otimes g\mapsto \alpha(g,g^{-1}f)e_g\otimes e_{g^{-1}f}.$$ Thus we recover the Yang-Baxter operator (\ref{ceyb})
on $k[G,\alpha]$. When $G$ is abelian the Yang-Baxter operator (\ref{ceyb}) reduces to $$R(e_f\otimes e_g) =
\frac{\alpha(f,g)}{\alpha(g,f)}e_g\otimes e_f,$$ which is nearly commutative. Note that, for a 2-cocyle $\alpha\in
Z^2(G,k^*)$ of an abelian $G$, the expression $\alpha(f,g)\alpha(g,f)^{-1}$ defines a skew-symmetric bi-multiplicative
form on $G$.

\section{Some applications}
Here we use theorems \ref{main},\ref{main1},\ref{main2} to describe braided, quasi- and nearly commutative structures
on groups in terms of certain group theoretic data. Note that the resulting Yang-Baxter operators are known (see
\cite{ess,lyz}).

A {\em matched pair} of groups $F,H$ is a group $G$, which can be written as a product $FH$. In that case the
multiplication in $G$ defines two functions $\alpha:H\times F\to F,\ \beta:H\times F\to H$ such that $hf =
\alpha(h,f)\beta(h,f)$.

A {\em 1-cocycle} of the group $G$ with coefficients in the group $K$ (on which $G$ acts from the right by group
automorphisms) is a map $\phi:G\to K$ such that $$\phi(fg) = \phi(f)^g\phi(g),\ \forall f,g\in G.$$ With a pair of
groups $G,K$ where the first acts on the second by group automorphisms we can associate their {\em semi-direct product}
$G\ltimes K$ which set-theoretically is the product of $G$ and $K$ with the multiplication given by: $$(f,u)(g,v) =
(fg,u^g v),\quad xf,g\in G, u,v\in K.$$
\begin{theo}
Braided structures on a group $G$ correspond to matched pairs of $G$ with itself. The Yang-Baxter operator, defined by
a matched pair $\alpha,\beta:G\times G\to G$ has the form:
\begin{equation}\label{ybmp}
R(f,g) = (\alpha(f,g),\beta(f,g)).
\end{equation}

A quasi-commutative structure on a group $G$ corresponds to a group $K$ with $G$ acting on it by automorphisms and a
1-cocycle $\phi:G\to K$ bijective as a set-theoretic map so that the Yang-Baxter operator has the form: $$R(f,g) =
(fg\psi(f,g)^{-1},\psi(f,g)),\quad\mbox{where}\ \psi(f,g) = \phi^{-1}(\phi(f)^{g}).$$ The Yang-Baxter operator is
involutive (the structure is nearly commutative) iff the group $K$ is abelian.
\end{theo}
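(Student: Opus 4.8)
The plan is to deduce all three statements from Theorems \ref{main}, \ref{main1} and \ref{main2} by analysing what the corresponding (truncated) cosimplicial complexes of monoids look like in $\Sets$ (with cartesian product) once $A^1=G$ is a group. In every case the decisive object is $A^2$ together with the two monoid homomorphisms $\partial_0,\partial_1\colon G\to A^2$. Since $G$ is a group, $\partial_0(g)$ and $\partial_1(g)$ are invertible in $A^2$; and the covering condition makes $\mu(\partial_0\times\partial_1)\colon G\times G\to A^2$ a bijection, so every element of $A^2$ is a product of two invertibles. Hence $A^2=:\tilde G$ is itself a group, into which $\partial_0,\partial_1$ embed $G$ (injectivity follows by evaluating the covering bijection at the unit).

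For the braided case I would invoke Theorem \ref{main1}, which identifies braided structures on $G$ with length-$3$ truncated semi-cosimplicial complexes satisfying the covering condition. By the above, the covering condition then reads $\tilde G=\partial_0(G)\,\partial_1(G)$ with unique factorisation, i.e. precisely a matched pair of $G$ with itself (both covering maps being bijective is automatic, since $FH=\tilde G$ forces $HF=\tilde G$ by inversion). Reading the operator $R=(\mu(\partial_0\times\partial_1))^{-1}\mu(\partial_1\times\partial_0)$ of Theorem \ref{main} through this factorisation -- writing $\partial_1(f)\partial_0(g)=\partial_0(\alpha(f,g))\partial_1(\beta(f,g))$ with $\alpha\in\partial_0(G)$, $\beta\in\partial_1(G)$ -- yields exactly formula (\ref{ybmp}).

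For the quasi-commutative case I would use Theorem \ref{main}, which in addition supplies the coface $\sigma\colon\tilde G\to G$ with $\sigma\partial_0=\sigma\partial_1=\mathrm{id}_G$, a common retraction of the two embeddings. Setting $K=\ker\sigma$ (normal, with $\tilde G=K\rtimes\partial_0(G)$ since $\sigma$ splits via $\partial_0$) and letting $G$ act on $K$ by $k^g=\partial_0(g)^{-1}k\,\partial_0(g)$, I would show that $\phi(g):=\partial_0(g)^{-1}\partial_1(g)\in K$ is a bijective $1$-cocycle: the homomorphism property of $\partial_0,\partial_1$ gives $\phi(fg)=\phi(f)^g\phi(g)$ at once, and bijectivity onto $K$ is the covering condition read through the two factorisations $\tilde G=\partial_0(G)\partial_1(G)=K\,\partial_0(G)$. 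Conversely $K$ and $\phi$ rebuild $\tilde G=K\rtimes G$, $\partial_0(g)=(e,g)$, $\partial_1(g)=(\phi(g),g)$, $\sigma(k,g)=g$. A short computation from $\partial_1(f)\partial_0(g)=\partial_0(\alpha)\partial_1(\beta)$ gives $\phi(\beta(f,g))=\partial_0(g)^{-1}\phi(f)\partial_0(g)=\phi(f)^g$, whence $\beta(f,g)=\psi(f,g)=\phi^{-1}(\phi(f)^g)$, while applying $\sigma$ gives $\alpha(f,g)\beta(f,g)=fg$, i.e. $\alpha(f,g)=fg\,\psi(f,g)^{-1}$; this is precisely the stated $R$. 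For near commutativity I would feed the same data through Theorem \ref{main2}: the symmetry condition $R^2=I$ becomes, via a (\ref{r2})-type unwinding of this formula, the requirement that $K$ be abelian (consistent with Example \ref{gr}, where $K\cong G$).

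The step I expect to be the main obstacle is the \emph{sufficiency} direction: verifying that a bare matched pair (resp. a bijective $1$-cocycle) really extends to a full length-$3$ complex satisfying \emph{all} covering conditions, equivalently that the resulting $R$ satisfies every axiom (\ref{ide})--(\ref{com}) together with the Yang--Baxter equation. I would dispose of this by building the third term $A^3$ from the genuine group $\tilde G$ (its iterated factorisation furnishing $A^3$ with its three cofaces) and observing that each higher covering map is again a group-factorisation bijection, so the conditions hold automatically once $\tilde G$ is known to be a group; alternatively one checks (\ref{ide}), (\ref{mur}), (\ref{mul}) and the hexagon directly inside $\tilde G$. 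That the assignments (structure $\leftrightarrow$ matched pair) and (structure $\leftrightarrow$ cocycle) are mutually inverse is then routine bookkeeping.
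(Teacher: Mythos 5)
Your forward direction reproduces the paper's argument: the covering condition makes every element of $A^2$ a product of two invertibles, hence $A^2$ is a group; in the braided case unique factorisation $A^2=\partial_0(G)\partial_1(G)$ is a matched pair, and solving $\mu(\partial_0\times\partial_1)R=\mu(\partial_1\times\partial_0)$ gives (\ref{ybmp}); in the quasi-commutative case $\sigma_0$ splits $\partial_0$, so $A^2\cong G\ltimes K$ with $K=\ker\sigma_0$, the map $\phi(g)=\partial_0(g)^{-1}\partial_1(g)$ is a bijective $1$-cocycle, and the same relation yields $R(f,g)=(fg\psi(f,g)^{-1},\psi(f,g))$. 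All of this is correct and is essentially what the paper does; your route to the formula via the factorisation $\partial_1(f)\partial_0(g)=\partial_0(\alpha)\partial_1(\beta)$, and your direct unwinding of $R^2=I$ for near-commutativity (where the paper instead shows that the $S_2$-generator $\tau(z,u)=(z,\phi(z)u^{-1})$ is multiplicative iff $K$ is abelian), are harmless variations.

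The genuine gap is in the converse, which you rightly single out as the main obstacle but then dispose of with ``the conditions hold automatically once $\tilde G$ is known to be a group.'' They do not: knowing $G^2=\tilde G$ as a group with its two cofaces and covering bijections is length-$2$ data only, while Theorems \ref{main} and \ref{main1} demand a length-$3$ complex, and for general monoids the Yang--Baxter equation does \emph{not} follow from (\ref{ide}), (\ref{mur}), (\ref{mul}), (\ref{com}) alone --- this is precisely the point of the author's ``black cat in a dark room'' remark in the acknowledgement. What is special to groups, and what the paper actually proves, is that the length-$2$ datum can be \emph{prolonged}: it constructs $G^3=G\ltimes(K\times K)$ (diagonal action) with $\partial_0(f,u)=(f,u,e)$, $\partial_1(f,u)=(f,u,u)$, $\partial_2(f,u)=(f,\phi(f),u)$, $\sigma_0(f,u,v)=(f,u)$, $\sigma_1(f,u,v)=(f,v)$, and then checks the cosimplicial identities and the covering condition for this specific complex; note that $\partial_2$ involves $\phi$ itself, so the prolongation is not a formal consequence of $\tilde G$ being a group. (Your ``iterated factorisation'' can be made precise --- this $G^3$ is the fibre product $\tilde G\times_G\tilde G$ along $\sigma_0$ --- but even so the five structure maps must be exhibited and the identities verified.) Your fallback, checking (\ref{ide})--(\ref{com}) and the Yang--Baxter equation by hand for $R(f,g)=(fg\psi^{-1},\psi)$, is a legitimate alternative (it is how Lu--Yan--Zhu argue), but you have not carried it out, and it is exactly the computation the cosimplicial formalism is designed to replace. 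As written, the implication (matched pair, resp.\ bijective cocycle) $\Rightarrow$ (braided, resp.\ quasi-commutative structure) is unproved in your proposal.
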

\begin{proof}
By theorem \ref{main} braided structures on a group $G$ correspond to (length 3 truncated) semi-cosimplicial complexes
of monoids $G^*$ with $G=G^1$ satisfying the covering condition. First we show that for such a complex all $G^n$ are
groups. Indeed, by the covering condition any element of $G^n$ can be written as a product of homomorphic images of
elements from $G^1$. Since $G^1$ is a group, homomorphic images of its elements are invertible and so are their
products. Now the covering condition implies that any element of $G^2$ can be uniquely written as a product of elements
of $\partial_0(G)$ and $\partial_1(G)$. The Yang-Baxter operator $R$ on $G^1$ is determined by the relation
$\mu_{G^2}(\partial_0\otimes\partial_1)R = \mu_{G^2}(\partial_1\otimes\partial_0)$, which implies the formula
(\ref{ybmp}).

Now by theorem \ref{main} quasi-commutative structures on a group $G$ correspond to cosimplicial complexes of monoids
$G^*$ with $G=G^1$ satisfying the covering condition. The homomorphisms $\partial_0:G^1\to G^2$, $\sigma_0:G^2\to G^1$
identifies $G^2$ with a semi-direct product $G\ltimes K$ where $K=ker(\sigma_0)$. So that implies $$\sigma_0(f,u) =
f,\quad
\partial_0(f) = (f,e),\ f\in G^1, u\in K.$$ Since $\partial_1:G^1\to G^2$ is also split by $\sigma_0:G^2\to G^1$ it
can be written as $\partial_1(f) = (f,\phi(f))$ for some map $\phi:G^1\to K$. More explicitly, $\phi(f) =
\partial_0(f)^{-1}\partial_1(f)$. Multiplicativity of $\partial_1$ implies that $\phi$ is 1-cocycle: $$\phi(fg) =
\partial_0(g)^{-1}\partial_0(f)^{-1}\partial_1(f)\partial_1(g) =
\partial_0(g)^{-1}\phi(f)\partial_0(g)\partial_0(f)^{-1}\partial_1(g) = \phi(f)^g\phi(g).$$
\newline
As before we can get the Yang-Baxter operator $R$ on $G^1$ using the relation $\mu_{G^2}(\partial_0\otimes\partial_1)R
= \mu_{G^2}(\partial_1\otimes\partial_0)$. For $R(f,g) = (z,w)$ $$\mu_{G^2}(\partial_1\otimes\partial_0)(f,g) =
(f,\phi(f))(g,1) = (fg,\phi(f)^g)$$ coincides $$\mu_{G^2}(\partial_0\otimes\partial_1)(z,w) = (z,e)(w,\phi(w)) =
(zw,\phi(w))$$ which imply that $z = fgw^{-1}, w = \phi^{-1}(\phi(f)^g)$.

It turns out that in the group case a length 3 cosimplicial complex with the covering condition is determined by its
length 2 part, so to check the Yang-Baxter equation for $R$ is enough to construct maps $\partial_i:G^2\to G^3$,
$\sigma_j:G^3\to G^2$ for $i=0,1,2, j=0,1$ satisfying the cosimplicial identities and the covering condition. Define
$G^3 = G\ltimes(K\times K)$ where the semi-direct product is taken with respect to the diagonal action of $G$ on $K$.
Define $\partial_i$ and $\sigma_j$ by $$\begin{array}{ll} \partial_0(f,u) = (f,u,e), & \sigma_0(f,u,v) = (f,u), \\
\partial_1(f,u) = (f,u,u), & \sigma_1(f,u,v) = (f,v), \\ \partial_2(f,u) = (f,\phi(f),u), &
\end{array}$$ The cosimplicial identities and the covering condition can be checked directly.

Finally in the nearly commutative case the automorphism $\tau:G^2\to G^2$ defined by
\begin{equation}\label{tau}
\tau\mu(\partial_0\otimes\partial_1) = \mu(\partial_1\otimes\partial_0)
\end{equation}
is a group automorphism. Applying both sides to $(f,g)\in G^{\times 2}$ we get $\tau(fg,\phi(g)) = (fg,\phi(f)^g)$.
Thus for $(z,u)\in G\ltimes K = G^2$ $\tau(z,u) = (z,\phi(z)u^{-1})$. Since $\phi$ is bijective, $\tau$ is a
homomorphism iff $K$ is abelian.
\end{proof}

\begin{rem}
\end{rem}
Here (following \cite{lyz}) we give another presentation of the 1-cocycle corresponding to a quasi-commutative
structure on a group.
\newline
If $R$ is a Yang-Baxter operator on a group $G$ defining a quasi-commutative structure the corresponding 1-cocycle
$\phi:G\to K$ can be constructed as follows. Write $R(x,y)$ as $(a(x,y),b(x,y))$. As a set $K$ coincides with $G$ while
the multiplication is given by $x*y = xa(x^{-1},y)$. The right $G$-action on $K$ is $x^y = y^{-1}xa(x^{-1},y)$ and the
1-cocycle is the inverse map $\phi(x) = x^{-1}$.
\newline
Indeed, set-theoretically $G^2$ coincides with the product $G\times G$, while the maps $\sigma_0:G^2\to G^1$,
$\partial_0,\partial_1:G^1\to G^2$ have the form: $$\sigma_0(x,y) = xy,\
\partial_0(x) = (x,e),\ \partial_1(x) = (e,x).$$ As the kernel of the multiplication map $K$ can be identified with $G$
via $x\mapsto (x,x^{-1})$. The multiplication $(x,y)*(z,w) = (za(y,z),b(y,z)w)$ on $G\times G$ corresponding to the
Yang-Baxter operator $R$ makes all the above maps group homomorphisms and preserves the image of the anti-diagonal
inclusion $G\to G\times G$ giving the product on $K$: $$(x,x^{-1})*(y,y^{-1}) = (x*y,(x*y)^{-1}).$$ The $G$-action on
$K$ is defined by $$(y,e)^{-1}*(x,x^{-1})*(y,e) = (x^y,(x^y)^{-1}).$$ Finally the 1-cocycle $\phi$ can be defined by
the equation $$(\phi(x),\phi(x)^{-1}) = \partial_0(x)^{-1}\partial_1(x) = (x,e)^{-1}*(e,x) = (x^{-1},x).$$

\begin{rem}
\end{rem}
The full cosimplicial monoid $G^*$ corresponding to a bijective 1-cocycle $\phi:G\to K$ can be described as follows.
The groups $G^n$ can be identified with semi-direct products $G\ltimes K^{\times n-1}$ with respect to the diagonal
action of $G$ on cartesian powers $K^{\times n-1}$. Codegeneration maps are given by
\begin{equation}\label{fcmbc}
\partial_i(g,u_1,...,u_{n-1}) = \left\{ \begin{array}{ll} (g,u_1,...,u_{n-1},e), & i=0 \\
(g,u_1,...,u_{n-i},u_{n-i},...,u_{n-1}), & 0<i<n \\
(g,\phi(g),u_1,...,u_{n-1}), & i=n \end{array}\right.
\end{equation}
while coface maps are given by $$\sigma_j (g,u_1,...,u_{n-1}) = (g,u_1,...,u_{n-j-1},u_{n-j+1},...,u_{n-1}),\quad 0\leq j\leq n-1.$$

\begin{rem}
\end{rem}
Note that a bijective 1-cocycle $\phi:G\to K$ identifies $G$ with $K$ equipped with the product $$u*v =
u^{\phi^{-1}(v)}v,\quad u,v\in K.$$ For an abelian $K$ the (modification of the above) formula $u*v =
u^{\phi^{-1}(v^n)}v$ gives a group structure on $K$ for an arbitrary $n$. Here we identify this structure with the
$n$-th symmetric power of the nearly commutative group $G$.
\newline
Indeed, it follows from the formulas (\ref{fcmbc}) that the maps $\varepsilon_i:G\to G^n,\quad i=1,...,n$,
corresponding to $n$ inclusions of $1$-element set into $n$-element set, have the form $$\begin{array}{lcl}
\varepsilon_0(g) & = & (g,e,...,e), \\ \varepsilon_1(g) & = & (g,\phi(g),e,...,e), \\ & \dots & \\ \varepsilon_n(g) & =
& (g,\phi(g),...,\phi(g)). \end{array}$$ The symmetric group action on $G^n = G\ltimes (K^{\times n-1})$ is uniquely
defined by the conditions $$\varepsilon_i\pi = \varepsilon_{\pi(i)},\quad \forall \pi\in S_n.$$ This, in particular,
allows us to determine the action of the Coxeter generators of $S_n$ on $G^n$: $$\tau_i(g,u_1,...,u_{n-1}) = \left\{
\begin{array}{ll}(g,\phi(g)u_1,...,u_{n-1}), & i=1 \\ (g,u_1,...,u_i,u_iu_{i+1}^{-1}u_{i+2},u_{i+2},...,u_{n-1}), &
0<i<n \\ (g,u_1,...,u_{n-1},u_{n-2}u_{n-1}^{-1}), & i=n \end{array}\right.$$ Thus the subgroup of $S_n$-invariants of
$G^n$ (the $n$-th symmetric power of $G$) is $$S^nG = (G^n)^{S_n} = \{ (\phi^{-1}(u^n),u^{n-1},...,u^2,u),\quad u\in
K\}$$ with the product given by $u*v = u^{\phi^{-1}(v^n)}v$. The 1-cocycle $S^n(\phi)$, corresponding to the nearly
commutative structure on $S^n(G)$, has the form $S^n(\phi)(u) = u^{-1}$.

\newpage
\subsection*{Appendix A. Batanin trees and the free monoidal category generated by a cosimplicial monoid}

We start with a combinatorial description of the category $\overline{\Omega}_2$, which is the full subcategory (of
pruned trees) of the category $\Omega_2$ of trees (of height 2) defined in \cite{ba,bs,jo}. Objects are surjections
(not necessarily order preserving) of finite ordered sets $t:T_2\to T_1$ (``ordered sets of ordered sets"). Morphisms
$(S_1,S_2,s)\to(T_1,T_2,t)$ are pairs of maps $(f_2,f_1)$ forming commutative squares

$$\xymatrix{ S_2 \ar[dd]^s \ar[rr]_{f_2} & & T_2 \ar[dd]^t \\
\\ S_1\ar[rr]^{f_1} & & T_1}$$
where $f_1$ is order-preserving and $f_2$ is order-preserving on fibres (for any $x\in S_1$ the restriction
$f_2:s^{-1}(x)\to t^{-1}(f_1(x))$ is order-preserving). Clearly these pairs are closed under composition. The ordered
sum (union) defines a monoidal structure on the category $\overline{\Omega}_2$ with the unit object given by the
identity map on the empty set $1:\emptyset\to\emptyset$.

There is a full inclusion $S:\Delta\to \overline{\Omega}_2$: $$X\mapsto S(X)=(X\to [1]).$$ Each $S(X)$ is a monoid with
respect to the monoidal structure on $\overline{\Omega}_2$. Indeed the map $$S(X)\otimes S(X) = (X\cup X\to [1]\cup[1]
= [2])\to (X\to [1]) = S(X)$$ given by the pair $(1\cup 1:X\cup X\to X, [2]\to [1])$ is an associative multiplication
with the unit map $(\emptyset\to\emptyset)\to(X\to [1])$. Moreover the morphism $S(X)\to S(Y)$ induced by an
order-preserving map $X\to Y$ is a homomorphism of monoids. Thus the functor $S$ maps into the category
$\Mon(\overline{\Omega}_2)$ of monoids in $\overline{\Omega}_2$ and defines a cosimplicial monoids $S^*$ in
$\overline{\Omega}_2$. We will show that as a monoidal category $\overline{\Omega}_2$ is freely generated by this
cosimplicial monoid.
\begin{prop}
For a monoidal category $\cC$ the evaluation at $S^*$ is an equivalence between the category of monoidal functors
$\Moncat(\overline{\Omega}_2,\cC)$ and the category of cosimplicial monoids in $\cC$.
\end{prop}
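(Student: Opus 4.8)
The plan is to exhibit an explicit quasi-inverse to the evaluation functor $\mathrm{ev}_{S^*}\colon\Moncat(\overline{\Omega}_2,\cC)\to(\text{cosimplicial monoids in }\cC)$, $F\mapsto F\circ S^*$, and to check that both composites are naturally isomorphic to the identity. Everything rests on two structural observations about $\overline{\Omega}_2$. First, on objects: a pruned tree $t\colon T_2\to T_1$ with fibre cardinalities $(n_1,\dots,n_m)$ (read off in the order of $T_1$, each $n_i\geq 1$ by surjectivity) is canonically the tensor product $S([n_1])\otimes\cdots\otimes S([n_m])$ of corollas, and this decomposition into $\otimes$-indecomposables is unique, since the number of level-$1$ nodes is additive under $\otimes$ and each corolla has exactly one. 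Second, on morphisms: a morphism $(f_2,f_1)\colon s\to t$ is completely determined by the order-preserving map $f_1\colon[k]\to[m]$ together with, for each source node $j$, the order-preserving restriction $g_j\colon[a_j]\to[b_{f_1(j)}]$ of $f_2$ to the fibre over $j$; since $f_1$ is order-preserving its fibres are consecutive blocks, so grouping the source nodes accordingly the morphism is the tensor over $i\in[m]$ of the ``merging'' maps $\bigotimes_{j\in f_1^{-1}(i)}S([a_j])\to S([b_i])$, and each such merging map is itself the composite $\bigotimes_{j}S(g_j)$ followed by the iterated multiplication $S([b_i])^{\otimes r_i}\to S([b_i])$ of the monoid $S([b_i])$, where $r_i=|f_1^{-1}(i)|$.

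These two facts say precisely that $\overline{\Omega}_2$ is generated, as a monoidal category, by the corollas $S([n])$ together with the images under $S$ of the $\Delta$-morphisms (the cosimplicial maps) and the monoid structure of the $S([n])$. Given a cosimplicial monoid $M^*$ I would therefore define $F_{M^*}$ by mimicking section \ref{mmc}: put $F_{M^*}(t)=M^{n_1}\otimes\cdots\otimes M^{n_m}$ on objects, and on a morphism with data $(f_1,(g_j)_j)$ as above set
\[
F_{M^*}(f_2,f_1)=\bigotimes_{i\in[m]}\Big(\mathrm{mult}_i\circ\!\!\bigotimes_{j\in f_1^{-1}(i)}\!\!M(g_j)\Big),
\]
where $\mathrm{mult}_i$ is the $r_i$-fold iterated multiplication of the monoid $M^{b_i}$, with the empty product $\mathrm{mult}_i=\iota\colon 1\to M^{b_i}$ (the monoid unit) when $f_1^{-1}(i)=\emptyset$, thereby handling non-surjective $f_1$. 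By construction $F_{M^*}$ sends $\otimes$ to $\otimes$, so it is strictly monoidal, and $F_{M^*}\circ S^*\cong M^*$ because $F_{M^*}$ restricts on corollas and cosimplicial maps to $M^*$ and carries the monoid structure of $S([n])$ to that of $M^n$.

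The main obstacle is verifying that $F_{M^*}$ is functorial, i.e.\ that it respects composition of the commutative squares. Given composable $(f_2,f_1)\colon s\to t$ and $(f_2',f_1')\colon t\to u$, the composite has level-$1$ part $f_1'f_1$ and leaf maps $g'_{f_1(j)}\,g_j$, and I would check $F_{M^*}(f_2'f_2,f_1'f_1)=F_{M^*}(f_2',f_1')\,F_{M^*}(f_2,f_1)$ by three ingredients, exactly those that make the cobar construction work: functoriality of $M$ on $\Delta$ to splice the cosimplicial maps, $M(g')M(g)=M(g'g)$; the fact, built into the notion of cosimplicial monoid, that every cosimplicial map $M(g')$ is a homomorphism of monoids, which lets it slide past an iterated multiplication, $M(g')\circ\mathrm{mult}=\mathrm{mult}\circ (M(g'))^{\otimes r}$; and associativity and unitality of the multiplications to amalgamate the iterated products arising from the two stages of merging. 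This is the same bookkeeping as in the proof that $\Delta$ is free on a monoid, now carried out one level up; I would either verify it directly on the above normal form or, equivalently, extract a generators-and-relations presentation of $\overline{\Omega}_2$ (corollas, cosimplicial maps, and the multiplication and unit of each $S([n])$, subject to the cosimplicial identities, the monoid axioms, and the evident interchange/naturality relations) and check that $F_{M^*}$ respects the relations.

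Finally, for full faithfulness I would argue that a monoidal natural transformation $F\Rightarrow G$ is determined by its components on the generating corollas $S([n])$, since every object is a tensor of corollas and monoidal naturality forces the component at a tensor to be the tensor of the components (via the structure constraints); compatibility of these components with the cosimplicial maps and with the monoid multiplications and units is then exactly the condition that they assemble into a morphism of cosimplicial monoids $F\circ S^*\to G\circ S^*$. Conversely, any morphism of cosimplicial monoids extends uniquely to a monoidal natural transformation by the same tensor formula, naturality on arbitrary morphisms following from naturality on the generators. Together with the essential surjectivity witnessed by $F_{M^*}$, this shows that $\mathrm{ev}_{S^*}$ is an equivalence.
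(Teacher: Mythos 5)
Your proposal is correct and takes essentially the same route as the paper: the paper also constructs the quasi-inverse explicitly, sending a tree $t\colon[n]\to[m]$ to $M^{|t^{-1}(1)|}\otimes\cdots\otimes M^{|t^{-1}(m)|}$ and a morphism to the tensor product, over target nodes, of the induced cosimplicial maps $M^{f_i}$ followed by iterated multiplications, exactly your merging maps. The only difference is thoroughness: the paper dismisses functoriality with ``it is not hard to see'' and does not discuss full faithfulness, both of which you spell out.
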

\begin{proof}
For a cosimplicial monoid $M^*$ in $\cC$ we will construct a monoidal functor $\overline{\Omega}_2\to\cC$ which sends
$S^*$ in to $M^*$. On objects define it by $([n]\stackrel{t}{\to}[m])\mapsto M^{|t^{-1}(1)|}\otimes...\otimes
M^{|t^{-1}(m)|}$. For a morphism

$$\xymatrix{ [n] \ar[rr]^f \ar[dd]_t & & [l] \ar[dd]^s \\
\\ [m] \ar[rr]^g & & [k] }$$

from $\overline{\Omega}_2$ define the map $$\otimes_{i=1}^m M^{|t^{-1}(i)|} = \otimes_{j=1}^k \otimes_{i\in
g^{-1}(j)}M^{|t^{-1}(i)|} \stackrel{\otimes_{j=1}^k \mu(j)}{\longrightarrow} \otimes_{j=1}^k M^{|s^{-1}(j)|}$$ as the
tensor product of the compositions $$\mu(j):\otimes_{i\in g^{-1}(j)}M^{|t^{-1}(i)|}\stackrel{\otimes_{i\in
g^{-1}(j)}M^{f_i}}{\longrightarrow} \otimes_{i\in g^{-1}(j)}M^{|s^{-1}(j)|} \stackrel{mult}{\longrightarrow}
M^{|s^{-1}(j)|}$$ where $mult$ is an iterated multiplication and $M^{f_i}:M^{|t^{-1}(i)|}\to M^{|s^{-1}(g(i))|}$ is the
map induced by an order preserving restriction $f_i:t^{-1}(i)\to s^{-1}(g(i))$. It is not hard to see that this defines
the desired functor $\overline{\Omega}_2\to\cC$.
\end{proof}

Now we show that covering maps between components of a cosimplicial monoid correspond to morphisms in
$\overline{\Omega}_2$ with bijective second component (morphisms of trees bijective on tips). A covering map is given
by a collection $f_i:[n_i]\to [n]$ of injective order preserving maps such that $im(f_i)\cap im(f_j)=\emptyset$ for
$i\not= j$ and $\cup_i im(f_i) = [n]$. The covering map $$\otimes_{i=1}^m M^{n_i}\stackrel{\otimes_i
M^{f_i}}{\longrightarrow} (M^n)^{\otimes m}\stackrel{mult}{\longrightarrow} M^n$$ corresponds to the composition in
$\overline{\Omega}_2$:

$$\xymatrix{ \cup_{i=1}^m [n_i] \ar[rr]^{\sqcup_i f_i} \ar[dd] & & [m]\times [n] \ar[dd] \ar[rr] & & [n] \ar[dd] \\
\\ [m] \ar[rr] & & [m] \ar[rr] & & [1]}$$
where $[m]\times[n]$ is the product of ordered sets with the lexicographic order. The composition amounts to

\begin{equation}\label{cov}
\xymatrix{ [n] \ar[rr]^f \ar[dd]_t & & [n] \ar[dd] \\
\\ [m] \ar[rr] & & [1]}
\end{equation}
where $t$ is the composition $[n]\simeq \cup_{i=1}^m [n_i] \to [m]$ and $f$ is the composition $$[n]\simeq \cup_{i=1}^m
[n_i] \stackrel{\cup_i f_i}{\longrightarrow} [n]$$ which is bijective by the covering condition.

Conversely any morphism in $\overline{\Omega}_2$

\begin{equation}\label{bot}
\xymatrix{ [n] \ar[rr]^f \ar[dd]_t & & [n] \ar[dd]^s \\
\\ [m] \ar[rr] & & [k]}
\end{equation}
with bijective $f$ is a tensor product of morphisms of the form (\ref{cov}). First, $g$ is surjective since $g\circ t =
s\circ f$ is. Second, $[n]\stackrel{f}{\longrightarrow} [m]$ is the tensor product $\otimes_{i=1}^k
([|(gt)^{-1}(i)|]\to [|g^{-1}(i)|])$. Finally the morphism (\ref{bot}) is the tensor product of
$(f_i,1):([|(gt)^{-1}(i)|]\to [|g^{-1}(i)|])\to ([|(gt)^{-1}(i)|]\to [1])$ where $f_i = f|_{(gt)^{-1}(i)}$ are the
restrictions of $f$.

Using the language of trees we can reformulate the main theorem \ref{main} as follows. Denote by ${\cal Q}$ the
collection of morphisms in $\overline{\Omega}_2$ with bijective second component (bijective on tips). The fact that the
only covering condition needs to be checked on level 2 has the following interpretation in terms of maps bijective on
tips.
\begin{lem}
The collection ${\cal Q}$ is monoidally generated by (every element is a composition of tensor products of) the
identity morphisms and the morphisms

\xymatrix{ [2] \ar[rr]^I \ar[dd]_I & & [2] \ar[dd] &&  [2] \ar[rr]^{(12)} \ar[dd]_I & & [2] \ar[dd]\\
\\ [2] \ar[rr] & & [1] &,& [2] \ar[rr] & & [1]}
\end{lem}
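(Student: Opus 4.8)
The plan is to combine the tensor decomposition established just before the lemma with an induction on the number of leaves. By that decomposition, every morphism of ${\cal Q}$ --- every morphism (\ref{bot}) of $\overline{\Omega}_2$ whose top map is bijective --- is a tensor product of single-root-target covering maps of the form (\ref{cov}); since a composite of tensor products of the two displayed morphisms is itself closed under tensoring, it suffices to write each such (\ref{cov}) as a composite of tensor products of the two generators and identities. Combinatorially a morphism (\ref{cov}) is a covering map $M^{n_1}\otimes\cdots\otimes M^{n_m}\to M^n$, $n=\sum_i n_i$, which merges $m$ ordered leaf-blocks into a single $n$-leaf tree along a bijection $f$ that is order preserving on each block; that is, it is a shuffle of the blocks.

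First I would dispose of the base case. For $n=2$ the single-root-target covering maps are exactly the identity of $M^2$, the order-preserving merge $M^1\otimes M^1\to M^2$ (the bijection $f=I$, which is the first generator) and the order-reversing merge (the bijection $f=(12)$, the second generator), so the statement holds. For the inductive step I would build the $n$-leaf tree by attaching the source leaves one at a time in the order prescribed by the target, reading off $f^{-1}(1),\ldots,f^{-1}(n)$; at each stage the first generator would attach the next leaf in order and the second would attach it after a transposition, exactly as the adjacent transpositions generate the symmetric group, with Lemma \ref{bractcc} governing how a braiding slides past a coface map. Associativity of the iterated multiplication on $M^n$ would make the order of attachment immaterial, so the resulting composite of tensor products of generators is well defined and equals the original covering map.

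The hard part will be the attachment step itself. Each generator merges two single leaves into a two-leaf tree, whereas attaching a further leaf to a partial tree already carrying $\geq 2$ leaves is a covering map of the form (\ref{cov}) that is not literally one of the two displayed maps; expressing these larger attachments through the two two-leaf generators is the precise combinatorial shadow of the fact, proved in Theorem \ref{main}, that the covering condition need only be imposed at level $2$. I expect the cosimplicial identities together with the braiding relations of Lemma \ref{bractcc} to be exactly what forces the higher attachments into the monoidal-compositional closure of the level-$2$ maps. Once this step is carried out the induction closes, and together with the opening tensor decomposition it yields that every morphism of ${\cal Q}$ is a composite of tensor products of identities and the two displayed morphisms.
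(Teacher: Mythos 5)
Your opening reduction --- to single-branch-target covering maps (\ref{cov}) via the tensor decomposition of (\ref{bot}) --- matches the paper's setup, and your base case is correct. But the proof stops exactly where you yourself flag it: the ``attachment step'' is not an argument, it is a conjecture, and it is the entire content of the lemma. Worse, as you have set it up it cannot be carried out. A tensor product of identity morphisms and the two displayed generators decomposes along a tensor factorisation of its source into trees, and each generator has source $S([1])\otimes S([1])$; hence such a map can only merge two \emph{adjacent one-leaf branches}, the identity factors being identities of whole trees. Once a branch with two or more leaves has been formed it can never absorb another leaf: out of $S([2])\otimes S([1])$ the only tensor product of identities and generators is the identity, so the order-preserving covering map $S([2])\otimes S([1])\to S([3])$ --- the very first attachment your induction needs --- is not reachable this way; indeed, since the generators have target $S([2])$ and $S([3])$ is tensor-indecomposable, no non-identity morphism with target $S([3])$ lies in the class your induction builds. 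The tools you hope will force the step through are also unavailable where you need them: $\overline{\Omega}_2$ carries no braiding, so Lemma \ref{bractcc} (a statement about the cobar complex in a \emph{braided} category, available here only after localisation) cannot slide anything past a coface map inside $\overline{\Omega}_2$, and the cosimplicial identities relate the $\sigma$'s and $\partial$'s but do not rewrite covering maps. Appealing to Theorem \ref{main} is moreover circular in spirit: the lemma is offered as the combinatorial interpretation of the fact that the covering condition need only be checked at level 2, not as its consequence.

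The paper's own proof is a one-line reduction in the opposite direction of your induction: it identifies the top components of ${\cal Q}$-morphisms with bijections in $\bS$ and cites the fact that bijective maps of $\bS$ are monoidally generated by identities and the transposition $(12)$. In other words, all the twisting is performed at the level of tips, by decomposing the shuffle $f$ into adjacent transpositions \emph{while the tips involved still sit on separate one-leaf branches}, and the merges are read off afterwards; the leaf-by-leaf attachment to an already-merged branch, which is where your scheme dies, never occurs. A repaired version of your argument would have to be organised the same way: every elementary factor must have source two adjacent one-leaf branches, so the word for $f$ must be spent on configurations of the form $S([1])^{\otimes n}$ before the corresponding merges, never after. (Even then the interleaving of transpositions with binary merges requires care --- the paper's one-liner is itself terse, and reaching a branch with three or more leaves forces one to read ``generated'' in the saturated sense appropriate to forming $\overline{\Omega}_2[{\cal Q}^{-1}]$ --- but that route at least avoids the concrete dead end your attachment step runs into.)
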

\begin{proof}
This follows directly from the fact that bijective maps of the monoidal category $\bS$ of finite sets are monoidally
generated by identity morphisms and the transposition $(12):[2]\to[2]$.
\end{proof}

\begin{theo}
The category of fractions $\overline{\Omega}_2[{\cal Q}^{-1}]$ is  the free braided monoidal category generated by a commutative monoid.
\end{theo}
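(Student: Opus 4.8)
The plan is to prove the equivalence by comparing universal properties, so that neither category need be analysed by hand. Both $\overline{\Omega}_2[{\cal Q}^{-1}]$ and $\Vines$ will be exhibited as monoidal categories representing the same $2$-functor on $\Moncat$, namely the assignment sending a monoidal category $\cC$ to (the category of) quasi-commutative monoids in $\cC$. An equivalence of the two representing objects then follows formally.

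First I would record that ${\cal Q}$ is closed under the tensor product of $\overline{\Omega}_2$. This is immediate from the preceding lemma, since ${\cal Q}$ is monoidally generated: a tensor product of two maps bijective on tips is again bijective on tips, and tensoring with an identity preserves the class. Consequently the category of fractions $\overline{\Omega}_2[{\cal Q}^{-1}]$ inherits a monoidal structure for which the localisation functor is strict monoidal, and it enjoys the expected universal property: for any monoidal category $\cC$, restriction along the localisation functor identifies monoidal functors $\overline{\Omega}_2[{\cal Q}^{-1}]\to\cC$ with those monoidal functors $\overline{\Omega}_2\to\cC$ that send every morphism in ${\cal Q}$ to an isomorphism.

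Next I would chain the equivalences already established. By the Proposition of the Appendix, monoidal functors $\overline{\Omega}_2\to\cC$ correspond to cosimplicial monoids $M^*$ in $\cC$, and under this correspondence the morphisms of ${\cal Q}$ go precisely to the covering maps of $M^*$: the discussion following that Proposition shows each generating morphism of type (\ref{cov}) maps to a covering map, and conversely every covering map arises in this way. Hence a monoidal functor inverts ${\cal Q}$ if and only if the associated cosimplicial monoid satisfies the covering condition. Combining this with Theorem \ref{main} (equivalence of data i) and ii)) identifies such cosimplicial monoids with quasi-commutative monoids in $\cC$, and Lemma \ref{chqc} identifies quasi-commutative monoids with monoidal functors $\Vines\to\cC$. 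Each of these is an equivalence of categories, natural in $\cC$ and compatible with the respective notions of homomorphism, so they compose to a $2$-natural equivalence $\Moncat(\overline{\Omega}_2[{\cal Q}^{-1}],\cC)\simeq\Moncat(\Vines,\cC)$.

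Finally, by the bicategorical Yoneda lemma a $2$-natural equivalence between the $2$-functors represented by $\overline{\Omega}_2[{\cal Q}^{-1}]$ and by $\Vines$ is induced by a monoidal equivalence of the representing objects themselves, which moreover carries the image of the universal cosimplicial monoid of $\overline{\Omega}_2$ to the universal commutative monoid of $\Vines$. The main obstacle I anticipate lies in the first step: one must verify carefully that localising a monoidal category at a tensor-closed class of arrows yields a monoidal category whose universal property holds with respect to \emph{monoidal} functors, not merely plain functors. Once this is secured, the remainder is a formal concatenation of the equivalences supplied by the Appendix, Theorem \ref{main}, and Lemma \ref{chqc}.
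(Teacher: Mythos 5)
Your proposal is correct in substance, but it takes a genuinely different route from the paper's. The paper proves the theorem concretely: it defines the automorphism $R$ on $S([1])\otimes S([1])$ by inverting a tip-bijective morphism into $S([2])$, and then verifies conditions (\ref{com}), (\ref{ide}), (\ref{mur}) and the Yang--Baxter equation one by one via commutative diagrams of trees --- in the author's words a proof ``absolutely similar to the proof of theorem \ref{main}'', i.e.\ a re-run of the implication iii)$\Rightarrow$i) carried out inside $\overline{\Omega}_2[{\cal Q}^{-1}]$ itself. You never open a single diagram: you treat Theorem \ref{main} as a black box and concatenate universal properties (monoidal localisation, the Proposition of the Appendix, the identification of ${\cal Q}$ with the covering maps, Theorem \ref{main}, Lemma \ref{chqc}), finishing with the bicategorical Yoneda lemma. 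Your route buys economy and, in fact, completeness of a kind the paper's write-up lacks: establishing a quasi-commutative monoid structure on $S([1])$ only produces, via Lemma \ref{chqc}, a monoidal functor $\Vines\to\overline{\Omega}_2[{\cal Q}^{-1}]$, and the paper leaves implicit why this yields an equivalence; your chain of representability statements is exactly that missing glue. The paper's route buys an explicit description of the braiding in the tree calculus without any appeal to $2$-categorical machinery.

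Two caveats you should not gloss over. First, the monoidal localisation step you flag yourself: one needs that inverting the tensor-closed class ${\cal Q}$ yields a monoidal category whose universal property is with respect to \emph{monoidal} functors (Day's monoidal localisation); note the paper relies on this silently too, since otherwise the statement of the theorem does not even typecheck. Second, Theorem \ref{main} is stated as an equivalence of \emph{data} on a fixed object $A$, whereas your Yoneda argument needs an equivalence of \emph{categories} (morphisms of covering-condition cosimplicial monoids versus homomorphisms of quasi-commutative monoids), pseudo-natural in $\cC$. This upgrade is routine --- both correspondences are built from composition, tensor products and inverses, hence commute with monoidal functors --- but it is an additional verification that must actually be recorded for your final step to be legitimate.
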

\begin{proof}

The proof is absolutely similar to the proof of theorem \ref{main} and consists of establishing a structure of a
quasi-commutative monoid on $S([1])$.  We will use graphical presentation of trees, which assigns to a surjection
$t:T_2\to T_1$ a planar tree of height 2 with one root node (at the height 0), with height 1 nodes labeled by $T_1$ and
top (height 2) nodes labelled by $T_2$. All height 1 nodes are connected with the root, a height 2 node is connected
with a height 1 node if the map $t$ maps the first into the second (see \cite{ba,bs} for details). In particular, the
trees corresponding to $$S([1]), S([1])\otimes S([1]), S([2]), S([1])\otimes S([1])\otimes S([1]), S([2])\otimes
S([1]), S([1])\otimes S([2])$$ have the following form: $$\TreeOne\quad,\quad \TreeOneOne\quad,\quad
\TreeTwo\quad,\quad \TreeOneOneOne\quad,\quad \TreeTwoOne\quad,\quad  \TreeOneTwo .$$ Inverting morphisms bijective on
tips (height 2 nodes) allows to define an automorphism $R$: $$\xymatrix{ & \TreeTwo & \\ \TreeOneOne \ar[rr]^R
\ar[ur]^{(12)} && \TreeOneOne \ar[ul]_1 }$$ Here we indicate only the top components of the morphisms (height one
components are determined uniquiely), so $1$ is the identity on tips and $(12)$ is the transposition of tips. As in the
proof of the theorem \ref{main} we use commutative diagrams to prove the defining relations of the quasi-commutative
structure. This time the language of trees allows us to make them more compact.

Condition (\ref{com}) follows from the diagram: $$\xygraph{ !{0;/r4.0pc/:;/u4.0pc/::}[]*+{\TreeOneOne}(
  ( :[drr]*++{\TreeOne}="r" ^\mu
  , :[rd]*+{\TreeTwo}="m" _{(12)}
    :"r"
  , :[dd]*+{\TreeOneOne} _R
    ( :"m" ^1
    , :"r" _\mu
    )
  )
) }$$ Here $\mu$ is the uniquely defined morphism, which is the multiplication map of $S([1])$. Conditions (\ref{ide})
are guaranteed by the diagrams:

$$\begin{xy} (-10,0)*+!R{\xybox{\xygraph{ !{0;/r4.0pc/:;/u4.0pc/::}[]*++{\TreeOne}(
  ( :[ull]*+{\TreeOneOne} _{\iota\otimes I}
    ( :[rd]*+{\TreeTwo}="m" _{(12)}
    ,
      :[dd]*+{\TreeOneOne}="d" _R
      :"m" ^1
    )
  , :"m" _{\partial_0}
  , :"d" ^{I\otimes\iota}
  )
) }}}
\POS(10,0)*+!L{\xybox{\xygraph{ !{0;/r4.0pc/:;/u4.0pc/::}[]*++{\TreeOne}(
  ( :[ull]*+{\TreeOneOne} _{I\otimes\iota}
    ( :[rd]*+{\TreeTwo}="m" _{(12)}
    ,
      :[dd]*+{\TreeOneOne}="d" _R
      :"m" ^1
    )
  , :"m" _{\partial_1}
  , :"d" ^{\iota\otimes I}
  )
) }}}\end{xy}$$ Here $\iota$ is the unit of $S([1])$ and $\partial_0,\partial_1$ are the morphisms mapping the tip of
$S([1])$ into the left (right) tip of $S([2])$ respectively.

For condition (\ref{mur}) we have the following commutative diagram: $$\xygraph{
!{0;/r6.0pc/:;/u4.0pc/::}[]*+{\TreeOneOneOne}(
  ( :[dr]*+{\TreeTwoOne}="mu" _{(12)}
    :[dr]*+{\TreeTwo}="r" _{ \left(\begin{smallmatrix} 1 & 2 & 3 \\ 1 & 2 & 1 \end{smallmatrix}\right)}
  , :[ul]*+{\TreeOneOne} ^{1\otimes\mu}
    ( :@/^6ex/ "r" ^{(12)}
    , :[d(6)]*+{\TreeOneOne}="d" _R
      :@/_6ex/ "r" _1
    )
  , :[dd]*+{\TreeOneOneOne} _{R\otimes 1}
    ( :"mu" ^1
    , :[dr]*+{\TreeOneTwo}="md" _{(23)}
      :"r" ^{\left(\begin{smallmatrix} 1 & 2 & 3 \\ 1 & 1 & 2 \end{smallmatrix}\right) }
    , :[dd]*+{\TreeOneOneOne} _{1\otimes R}
      ( :"md" ^1
      , :"d" _{\mu\otimes 1}
      )
    )
  )
) }$$ Analogously condition (\ref{mur}) follows from: $$\xygraph{ !{0;/r6.0pc/:;/u4.0pc/::}[]*+{\TreeOneOneOne}(
  ( :[dr]*+{\TreeOneTwo}="mu" _{(23)}
    :[dr]*+{\TreeTwo}="r" _{\left(\begin{smallmatrix} 1 & 2 & 3 \\ 2 & 1 & 2 \end{smallmatrix}\right) }
  , :[ul]*+{\TreeOneOne} ^{\mu\otimes 1}
    ( :@/^6ex/ "r" ^{(12)}
    , :[d(6)]*+{\TreeOneOne}="d" _R
      :@/_6ex/ "r" _1
    )
  , :[dd]*+{\TreeOneOneOne} _{1\otimes R}
    ( :"mu" ^1
    , :[dr]*+{\TreeTwoOne}="md" _{(12)}
      :"r" ^{\left(\begin{smallmatrix} 1 & 2 & 3 \\ 1 & 2 & 2 \end{smallmatrix}\right) }
    , :[dd]*+{\TreeOneOneOne} _{R\otimes 1}
      ( :"md" ^1
      , :"d" _{1\otimes\mu}
      )
    )
  )
) }$$
As before we only write the effect of morphisms on tips.

Finally for the Yang-Baxter equation we have the following commutative diagram:
$$\xygraph{ !{0;/r4.7pc/:;/u4.0pc/::}[]*+{\TreeOneOneOne}(
  ( :[ddl]*+{\TreeOneTwo}="(-1,2)" ^{(23)}
    :[ddr]*{\TreeThree}="(0,0)" ^{(132)}
  , :[ddr]*+{\TreeTwoOne}="(1,2)" _{(12)}
    :"(0,0)" _{(123)}
  , :[ddrrr]*+{\TreeOneOneOne} ^{R\otimes 1}
    ( :"(1,2)" _1
    , :[ddl]*+{\TreeOneTwo}="(2,0)" _{(23)}
      :"(0,0)" _{(12)}
    , :[d(4)]*+{\TreeOneOneOne} ^{1\otimes R}
      ( :"(2,0)" ^1
      , :[ll]*+{\TreeTwoOne}="(1,-2)" _{(12)}
        :"(0,0)" ^1
      , :[ddlll]*+{\TreeOneOneOne}="(0,-4)" ^{R\otimes 1}
        ( :"(1,-2)" ^1
        , :[uul]*+{\TreeOneTwo}="(-1,-2)" _1
          :"(0,0)" _1
        )
      )
    )
  , :[ddlll]*+{\TreeOneOneOne} _{1\otimes R}
    ( :"(-1,2)" ^1
    , :[ddr]*+{\TreeTwoOne}="(-2,0)" ^{(12)}
      :"(0,0)" ^{(23)}
    , :[d(4)]*+{\TreeOneOneOne} _{R\otimes 1}
      ( :"(-2,0)" _1
      , :"(-1,-2)" ^{(23)}
      , :"(0,-4)" _{1\otimes R}
      )
    )
  )
) }$$
Again the labels for morphisms indicate the effect on tips.
\end{proof}

\end{document}